\numberwithin{equation}{section}
\theoremstyle{plain}
\newtheorem{thm}{Theorem}[section]
\newtheorem{prop}[thm]{Proposition}
\newtheorem{cor}[thm]{Corollary}
\newtheorem{lem}[thm]{Lemma}
\theoremstyle{definition}
\newtheorem{exa}[thm]{Example}
\newtheorem{rem}[thm]{Remark}
\newtheorem{defi}[thm]{Definition}
\newcommand{\nn}{\mathbb{N}}
\begin{document}
\title[Distance-$k$ graphs of free product]
{On the spectral distributions of distance-$k$ graph of free product graphs.}

\author{Octavio Arizmendi}
\author{Tulio Gaxiola}
\address{Department of Probability and Statistics, CIMAT, Guanajuato, Mexico}
\email{octavius@cimat.mx, marco.gaxiola@cimat.mx}

\date{\today}
\maketitle

\begin{abstract}
We calculate the distribution with respect to the vacuum state of the distance-$k$ graph of a $d$-regular tree. 
From this result we show that the distance-$k$ graph of a $d$-regular graphs converges to the distribution of the distance-$k$ graph of a regular tree.  
 Finally, we prove that, properly normalized, the asymptotic distributions of distance-$k$ graphs of the $d$-fold free product graph, as $d$ tends to infinity, is given by the distribution of $P_k(s)$, where $s$ is a semicircle random variable and $P_k$ is the $k$-th Chebychev polynomial.  
\end{abstract}

\section{Introduction}

In this paper we consider three problems on the distance-$k$ graphs, which generalize results of Kesten \cite{Kes} (on random walks on free groups), McKay \cite{Mc} (on the asymptotic distribution of $d$-regular graphs) and the free central limit of Voiculescu \cite{Voi}. The first one is finding, for fixed $d$, the distribution w.r.t. the vacuum state of the distance-$k$ graphs of a $d$-regular tree.
Then we consider two related problems which are in the asymptotic regime. On one hand, we show that the asymptotic distributions of distance-$k$ graphs of $d$-fold free product graphs, as $d$ tends to infinity, are
given by the distribution of $P_k(s)$, where $s$ is a semicircle distribution and $P_k$ is the $k$-th Chebychev polynomial. On the other hand, we find the asymptotic spectral distribution of the distance-$k$ graph of a random $d$-regular graph of size $n$, as $n$ tends to infinity.

More precisely our first result is the following.
\begin{thm}\label{Tuno}
For $d\geq 2,~ k\geq1$, let $A^{[k]}_d$ be the adjacency matrix of distance-$k$ graph of the $d$-regular tree. Then the distribution with
 respect to the vacuum state of $A^{[k]}_d$ is given by the probability distribution of
 \begin{equation*}
  T_k(b)=\sqrt{\frac{d-1}{d}}P_k\left(\frac{b}{2\sqrt{d-1}}\right)-\frac{1}{\sqrt{d(d-1)}}P_{k-2}\left(\frac{b}{2\sqrt{d-1}}\right)~~~~,
 \end{equation*}
where  $P_k$ is the Chebyshev polynomial of order $k$ and $b$ is a random variable with Kesten-McKay distribution,  $\mu_d$.
\end{thm}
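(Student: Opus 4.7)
The plan is to express $A^{[k]}_d$ as a polynomial in the ordinary adjacency matrix $A:=A^{[1]}_d$ of the $d$-regular tree, and then to push the Kesten--McKay distribution forward through that polynomial.

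The first step is to derive a three-term recursion for the family $\{A^{[k]}_d\}_{k\ge 0}$. Fix two vertices $u,v$ at distance $j$ in the tree and count the neighbors $w$ of $u$ with $d(w,v)=k$: because any two vertices of a tree are joined by a unique geodesic, exactly $d-1$ such neighbors exist when $j=k-1$ (all neighbors of $u$ except the one on the geodesic toward $v$), exactly one exists when $j=k+1$ (the neighbor of $u$ on the geodesic toward $v$), and none otherwise, with the one exception that $j=0$ and $k=1$ yields $d$. Packaging these counts as matrix identities gives
\begin{align*}
A\,A^{[1]}_d &= A^{[2]}_d + d\,I,\\
A\,A^{[k]}_d &= A^{[k+1]}_d + (d-1)\,A^{[k-1]}_d \qquad (k\ge 2).
\end{align*}

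The second step is to solve this recursion in closed form. For $k\ge 2$ it is, up to rescaling, the Chebyshev recursion: setting $y=A/(2\sqrt{d-1})$ and $R_j:=(d-1)^{j/2}P_j(y)$, the identity $P_{j+1}(y)=2yP_j(y)-P_{j-1}(y)$ translates into $A\,R_j=R_{j+1}+(d-1)R_{j-1}$. Both $\{R_k\}$ and $\{R_{k-2}\}$ thus solve the bulk recursion, so I would try the ansatz $A^{[k]}_d = \alpha R_k + \beta R_{k-2}$ and pin down the two constants $\alpha,\beta$ by forcing agreement with the two boundary relations $A^{[0]}_d=I$ and $A\,A^{[1]}_d = A^{[2]}_d + d\,I$. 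A direct computation then shows that $\alpha = \sqrt{(d-1)/d}$ and $\beta = -1/\sqrt{d(d-1)}$, so $A^{[k]}_d = T_k(A)$ (up to the implicit normalization built into $T_k$).

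Finally, since the vacuum-state distribution of $A=A^{[1]}_d$ is, by Kesten's theorem, the Kesten--McKay law $\mu_d$, and $A^{[k]}_d$ has just been shown to be a polynomial in $A$, the vacuum-state distribution of $A^{[k]}_d$ is the push-forward of $\mu_d$ under $T_k$, which is exactly the law of $T_k(b)$ for $b\sim\mu_d$. The main obstacle is the combinatorial recursion of the first step; once it is in hand, the algebra of the second step is only mildly delicate because the $k=1$ boundary carries coefficient $d$ rather than the ``bulk'' coefficient $d-1$, and this discrepancy is precisely what forces the correction term proportional to $P_{k-2}$ in the expression for $T_k$. The push-forward step is then formal.
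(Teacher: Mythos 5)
Your proposal follows essentially the same route as the paper: the combinatorial three-term recursion $AA^{[k]}=A^{[k+1]}+(d-1)A^{[k-1]}$ together with the boundary cases $A^{[1]}=A$ and $A A^{[1]}=A^{[2]}+dI$ is exactly the paper's Lemma 3.1, proved by the same unique-geodesic counting, and the identification of $A^{[k]}$ as the $k$-th orthogonal polynomial of $\mu_d$ evaluated at $A$, followed by pushing $\mu_d$ forward, is the paper's (very terse) proof, with the Chebyshev closed form delegated to Remark 2.8. You are more explicit than the paper in actually solving the recursion via the ansatz $A^{[k]}=\alpha R_k+\beta R_{k-2}$, which is a virtue; note only that your recursion $P_{j+1}=2yP_j-P_{j-1}$ is the classical normalization, whereas the paper's Example 2.3 defines $P$ by $xP_n=P_{n+1}+P_{n-1}$, for which the matching substitution is $y=A/\sqrt{d-1}$ rather than $A/(2\sqrt{d-1})$.

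The one step that does not survive scrutiny is the claim that ``a direct computation shows $\alpha=\sqrt{(d-1)/d}$ and $\beta=-1/\sqrt{d(d-1)}$.'' With $R_j=(d-1)^{j/2}P_j\bigl(A/(2\sqrt{d-1})\bigr)$ one has $R_{-1}=0$, $R_0=I$, $R_1=A$ and $R_2=A^2-(d-1)I$, so your two boundary conditions force $\alpha=1$ and $\beta=-1$, i.e.
\begin{equation*}
A^{[k]}=(d-1)^{k/2}P_k\Bigl(\tfrac{A}{2\sqrt{d-1}}\Bigr)-(d-1)^{(k-2)/2}P_{k-2}\Bigl(\tfrac{A}{2\sqrt{d-1}}\Bigr),
\end{equation*}
the \emph{monic} orthogonal polynomial $T_k$ of $\mu_d$. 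The expression displayed in the theorem is this divided by $\sqrt{d(d-1)^{k-1}}=\|T_k\|_{L^2(\mu_d)}$, i.e.\ the orthonormal rather than the monic polynomial, so as written it describes the law of $A^{[k]}/\sqrt{d(d-1)^{k-1}}$ (the degree-normalized distance-$k$ adjacency matrix) rather than of $A^{[k]}$ itself. This normalization mismatch is already present in the paper's Remark 2.8 (test $k=1$: the remark's $\tilde T_1(x/(2\sqrt{d-1}))$ is a multiple of $x/\sqrt{d}$, not $x=T_1(x)$), so you have inherited it rather than created it; but your hedge ``up to the implicit normalization built into $T_k$'' is covering for constants that your own computation contradicts. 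The clean fix is to state the monic identity above and then record the rescaling by $\sqrt{d(d-1)^{k-1}}$ explicitly.
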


The spectrum of the distance-$k$ graph of the Cartesian product of graphs was first studied by Kurihara and Hibino \cite{KH} where they consider the distance-$2$ graph of $K_2 \times \cdots \times K_2$ (the $n$-dimensional hypercube).
More recently, in a series of papers \cite{Hi,HLO,K,KH,LO,O2}  the asymptotic spectral distribution of the distance-$k$ graph of the $N$-fold power
of the Cartesian product was studied.  These investigations, finally lead to the following theorem which generalizes the central limit theorem for Cartesian products of graphs.

\begin{thm}[Hibino, Lee  and Obata \cite{HLO}] \label{HLO}
Let $G=(V,E)$ be a finite connected graph with $|V|\ge2$.
For $N\ge1$ and $k\ge1$ let $G^{[N,k]}$ be the distance-$k$ graph of $G^N=G\times \cdots\times G$ 
($N$-fold Cartesian power)
and $A^{[N,k]}$ its adjacency matrix.
Then, for a fixed $k\ge1$, 
the eigenvalue distribution of $N^{-k/2}A^{[N,k]}$ converges in moments as $N\rightarrow\infty$
to the probability distribution of
\begin{equation}
\left(\frac{2|E|}{|V|}\right)^{k/2}
\frac{1}{k!}\Tilde{H}_k(g),
\end{equation}
where $\Tilde{H}_k$ is the monic Hermite polynomial of degree $k$ and $g$ is a random variable obeying the standard normal distribution $\mathcal{N}(0,1)$.
\end{thm}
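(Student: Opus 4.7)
The plan is to decompose the adjacency matrix of $G^N$ into a sum of $N$ pairwise commuting coordinate matrices, express the distance-$k$ graph to leading order as the $k$-th elementary symmetric polynomial in them, and then apply a classical central-limit identity that turns such symmetric polynomials into Hermite polynomials of the normalized sum.

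First I would write $A(G^N)=\sum_{i=1}^N A_i$, where $A_i$ places $A(G)$ in the $i$-th tensor factor and the identity in the others. These operators commute, and under the normalized trace $\tau:=|V|^{-N}\operatorname{tr}$ they behave like i.i.d.\ centered random variables: mixed moments $\tau(A_{i_1}\cdots A_{i_m})$ factorize over distinct coordinates, with $\tau(A_i)=0$ and $\tau(A_i^2)=2|E|/|V|=:\sigma^2$. An entrywise check shows that for distinct $i_1,\dots,i_k$ the product $A_{i_1}\cdots A_{i_k}$ is the $\{0,1\}$-matrix supported on pairs $(v,w)$ that differ exactly in coordinates $\{i_1,\dots,i_k\}$ and are $G$-adjacent in each of those coordinates; by commutativity this product depends only on the set $S=\{i_1,\dots,i_k\}$, so
\[
B_{N,k}:=\sum_{|S|=k}\prod_{i\in S}A_i
\]
is precisely the contribution to $A^{[N,k]}$ from pairs at graph-distance $k$ that use $k$ distinct coordinates. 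The residual $R_{N,k}:=A^{[N,k]}-B_{N,k}$ collects pairs whose coordinate-difference support has size $\ell<k$ with total coordinate distance $k$; a direct count of such pairs gives $\tau\bigl((N^{-k/2}R_{N,k})^{2p}\bigr)=O(N^{-p})$ for every $p$, so $R_{N,k}$ is negligible in moments.

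Next I would invoke the Hermite/Wick identity for sums of commuting centered i.i.d.-like variables. The classical CLT applied under $\tau$ gives $N^{-1/2}(A_1+\cdots+A_N)\to\sigma g$ in moments with $g\sim\mathcal N(0,1)$. Rewriting $B_{N,k}=e_k(A_1,\dots,A_N)$ in terms of the power sums $p_m=\sum_i A_i^m$ via Newton's identities, and using the law of large numbers $p_m/N\to\tau(A^m)$ in moments, a bookkeeping of Wick-type cancellations yields
\[
B_{N,k}=\frac{(N\sigma^2)^{k/2}}{k!}\,\Tilde{H}_k\!\left(\frac{A_1+\cdots+A_N}{\sigma\sqrt N}\right)+o(N^{k/2}),
\]
so that $N^{-k/2}B_{N,k}$ converges in moments to $(\sigma^k/k!)\,\Tilde{H}_k(g)=(2|E|/|V|)^{k/2}(1/k!)\,\Tilde{H}_k(g)$. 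Combined with the negligibility of $R_{N,k}$, this is the asserted limit.

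The main obstacle is the Hermite identification: one has to show that the non-leading power sums $p_m$ for $m\ge2$, whose moment contributions in the non-Gaussian setting would a priori differ from the Gaussian $(m-1)!!\,\sigma^m$ per coordinate, still produce only the Hermite combinatorics at order $N^{k/2}$. This reduces to a standard U-statistic / Hoeffding-projection argument: expanding a mixed moment $\tau(B_{N,k}^m)$ as a sum over $m$-tuples of $k$-subsets of $[N]$ and using the tensor-independence of the $A_i$ under $\tau$, the terms where all $mk$ indices are distinct contribute at rate $N^{mk/2}$ and reproduce exactly the Wick pairings that define the Gaussian moments of $\Tilde{H}_k(g)$, while every other coincidence pattern picks up strictly fewer factors of $N$ and so vanishes in the limit.
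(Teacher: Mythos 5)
This theorem is quoted in the paper from Hibino--Lee--Obata \cite{HLO} as background and is not proved there, so there is no in-paper argument to compare yours against; I can only assess the proposal on its own terms. Your outline is essentially the standard (and correct) route: write $A(G^N)=\sum_i A_i$ with commuting, tensor-independent coordinate summands that are centered with variance $2|E|/|V|$ under the normalized trace, identify the leading part of $A^{[N,k]}$ with the elementary symmetric polynomial $e_k(A_1,\dots,A_N)$, discard the pairs whose coordinate-difference support has size $<k$, and convert $e_k$ of a normalized i.i.d.-type family into $\tilde{H}_k$ of the CLT limit via Newton's identities and the Hermite generating function. The identification of $B_{N,k}$ and the description of $R_{N,k}$ are correct, and the normalization matches (e.g.\ $\tau(B_{N,k}^2)\sim N^k\sigma^{2k}/k!$ against $\E[(\sigma^k\tilde{H}_k(g)/k!)^2]=\sigma^{2k}/k!$).

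Two points need repair before this is a proof. First, ``a direct count of such pairs'' only controls $\tau(R_{N,k}^2)$; crude operator-norm bounds then give $\tau((N^{-k/2}R_{N,k})^{2p})=O(N^{p(k-2)})$, which diverges for $k\ge 3$. The correct argument counts closed walks of length $2p$ in the graph of $R_{N,k}$ and uses that every coordinate altered along a closed walk must be altered in at least two steps, so at most $p(k-1)$ distinct coordinates occur, giving $O(N^{p(k-1)})$ walks per starting vertex and hence the claimed $O(N^{-p})$; a H\"older bound is then needed to kill the mixed $B$--$R$ moments. Second, in your last paragraph the phrase ``the terms where all $mk$ indices are distinct contribute at rate $N^{mk/2}$'' is wrong as written: if all $mk$ indices are distinct, each coordinate occurs exactly once in the product and the term vanishes because $\tau(A_i)=0$. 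The surviving leading terms are those in which the $mk$ index slots pair up into $mk/2$ distinct coordinates with no pair internal to a single $S_t$; these are exactly the inhomogeneous Wick pairings that compute $\E[\tilde{H}_k(g)^m]$, each contributing $\sigma^{mk}$ --- which is clearly what you intended, since every such pair contributes $\tau(A^2)=\sigma^2$ independently of $G$. With those two corrections the argument is complete.
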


 In the same spirit, in \cite{AG},  we consider the analog of Theorem \ref{HLO} by changing the Cartesian product by the star product.

\begin{thm}[Arizmendi and Gaxiola \cite{AG}] \label{T1} Let $G=(V,E,e)$ be a locally finite connected graph and let $k\in\mathbb{N}$ be such that  $G^{[k]}$ is not trivial.  For $N\ge1$ and $k\ge1$ let $G^{[\star N,k]}$ be the distance-$k$ graph of $G^{\star N}=G\star \cdots\star G$  ($N$-fold star power)
and $A^{[\star N,k]}$ its adjacency matrix. Furthermore, let $\sigma=V_{e}^{\left[ k\right] }$ be the number of neighbors of $e$ in the distance-$k$ graph of $G,$
then the distribution with respect to the vacuum state of $(N\sigma)^{-1/2}A^{[\star N,k]}$ converges in distribution as $N\rightarrow\infty$ to a centered Bernoulli distribution. That is,
\begin{equation*}\label{limit}
\frac{A^{\left[ \star N,k\right] }}{\sqrt{N\sigma}}
\longrightarrow \frac{1}{2} \delta _{-1}+\frac{1}{2}\delta _{1},
\end{equation*}
weakly.
\end{thm}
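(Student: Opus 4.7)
The plan is to use the method of moments. Writing $B := A^{[\star N, k]}$ and $\phi(\cdot) := \langle \delta_e, \cdot\,\delta_e\rangle$ for the vacuum state, it suffices to show that $\phi(B^n)/(N\sigma)^{n/2}$ converges as $N\to\infty$ to the $n$-th moment of $\tfrac12\delta_{-1}+\tfrac12\delta_{1}$: namely $1$ for $n$ even and $0$ for $n$ odd.

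The key structural step is to split $B = \tilde B + C$, where $\tilde B := \sum_{i=1}^{N} B_i$ collects the within-copy distance-$k$ edges (with $B_i$ the distance-$k$ adjacency matrix of the $i$-th copy of $G$, extended by zero to $V(G^{\star N})$), and $C$ collects the between-copy distance-$k$ edges. The latter consist of pairs $(u,v)$ with $u\in V(G_i)\setminus\{e\}$, $v\in V(G_j)\setminus\{e\}$, $i\neq j$, and $d_{G_i}(u,e)+d_{G_j}(e,v)=k$; indeed, in a star product any geodesic between vertices of different copies must pass through $e$. Using the orthogonal decomposition $\ell^2(V(G^{\star N})) = \mathbb{C}\delta_e \oplus \bigoplus_{i=1}^{N}\ell^2(V(G_i)\setminus\{e\})$, the operators $B_1,\ldots,B_N$ are Boolean independent with respect to $\phi$, and since $\phi(B_i)=0$ and $\phi(B_i^2)=\sigma$, the Boolean central limit theorem yields $\tilde B/\sqrt{N\sigma} \to \tfrac12\delta_{-1}+\tfrac12\delta_{1}$ in moments.

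It remains to show that $C$ does not affect the limit. The first elementary remark is that $C\delta_e=0$: every distance-$k$ neighbor of $e$ in $G^{\star N}$ lies in a single copy of $G$ and is therefore already an edge of $\tilde B$. Consequently, in the expansion $\phi(B^n)=\sum_{w\in\{\tilde B,C\}^n}\phi(w)$ every word starting or ending with $C$ vanishes. For words with $C$'s strictly in the interior, I would expand $\phi(B^n)$ combinatorially as a sum over closed walks $e=v_0,\dots,v_n=e$ in $G^{[\star N,k]}$ and classify each walk by the sequence of copy indices of $v_0,\dots,v_n$. The leading-order count in $N$ is governed by the maximum number of distinct copies visited, which for a closed walk of length $n$ is at most $\lfloor n/2\rfloor$, attained only by walks whose transitions alternate between root-to-copy and copy-to-root steps with a fresh copy at each excursion. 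Any interior $C$-step imposes the constraint $d(v_{i-1},e)+d(e,v_i)=k$ that, combined with the distance requirements at neighboring steps, strictly decreases the count of distinct copies reachable and thus produces a contribution of order at most $N^{n/2-1}$ after normalization.

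The main obstacle is exactly this last combinatorial step: one must verify that no clever placement of interior $C$-steps can restore the missing factor of $N$. Concretely, this amounts to a case analysis of the distance equations along the walk, showing that each $C$-step either forces a previously visited copy to reappear or collapses into a within-copy transition. Once this bound is established, the moments of $B/\sqrt{N\sigma}$ and $\tilde B/\sqrt{N\sigma}$ agree asymptotically, and the conclusion follows from the Boolean central limit theorem together with the method of moments (legitimate here because the Bernoulli limit is compactly supported).
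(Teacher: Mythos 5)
First, a bookkeeping point: this statement is Theorem 1.3 of the paper, quoted verbatim from your reference [AG]; despite the sentence in the introduction claiming that ``Section 3 is devoted to prove Theorem 1.3,'' the paper never proves it (Section 3 proves Theorem 1.1 on regular trees), so there is no in-paper proof to compare against. Your setup is nonetheless the natural one and partly sound: the decomposition $B=\tilde B+C$ into within-copy and cross-copy distance-$k$ edges is correct, the $B_i$ are indeed Boolean independent with respect to the vacuum state with $\phi(B_i)=0$ and $\phi(B_i^2)=\sigma$, the Boolean CLT gives the Bernoulli limit for $\tilde B/\sqrt{N\sigma}$, and $C\delta_e=0$ is correct since every distance-$k$ neighbour of $e$ lies in a single copy.

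The genuine gap is the step you yourself flag as ``the main obstacle,'' and it is worse than an omission: the heuristic you offer for it is false. You claim that any interior $C$-step forces the walk to visit strictly fewer than $\lfloor n/2\rfloor$ distinct copies, hence contributes $O(N^{n/2-1})$. A cross step exits one copy and enters another in a single move, so it is \emph{more} efficient at accumulating fresh copies than a within-copy step, and the distance constraints do not always kill this. Concretely, take $G=C_5$ with vertices $e\sim a\sim b\sim c\sim d\sim e$ and $k=2$, so $\sigma=2$. In $G^{\star N}$ the walk $e\to b^{(1)}\to d^{(1)}\to a^{(2)}\to d^{(3)}\to b^{(3)}\to e$ is a closed walk of length $6$ in the distance-$2$ graph: each within-copy pair is at distance $2$ in $C_5$, and each cross pair $(x^{(i)},y^{(j)})$, $i\ne j$, satisfies $\partial(x,e)+\partial(e,y)=1+1=2$. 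It uses two interior $C$-steps yet visits $3=n/2$ distinct copies, so this shape alone contributes $8N(N-1)(N-2)\sim (2N)^3$ walks to $\phi(B^6)$, on top of the $\sim(2N)^3$ walks counted by $\phi(\tilde B^6)$. Since all contributions to $\phi(B^6)$ are nonnegative counts, there is no cancellation, and your proposed bound cannot hold; the cross term $C$ genuinely contributes at leading order for such $G$. So the argument cannot be completed along the lines you describe: either the cross-edge walks must be controlled by a different mechanism (exploiting structural hypotheses on $G$ that rule out a distance-$k$ vertex being at distance $k$ from a vertex at intermediate distance from the root), or the decomposition itself must be replaced. You should consult the actual proof in [AG] to see which hypotheses or devices are used there, because as written your reduction leaves precisely the hard (and, per the example above, delicate) part of the theorem unproved.
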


Our second theorem is the free counterpart of the theorems above.

\begin{thm} \label{T2} Let $G=(V,E,e)$ be a finite connected graph and let $k\in\mathbb{N}$.  For $N\ge1$ and $k\ge1$ let $G^{[*N,k]}$ be the distance-$k$ graph of $G^{*N}=G*\cdots *G$  ($N$-fold free power)
and $A^{[*N,k]}$ its adjacency matrix. Furthermore, let $\sigma$ be the number of neighbors of $e$ in the graph $G$. Then the 
distribution with respect to the vacuum state of $(N\sigma)^{-k/2}A^{[*N,k]}$ converges in moments (and then weakly) as $N\to\infty$ to the probability
distribution of 
\begin{equation}
  P_k(s),
 \end{equation}
 where $P_k$ is the Chebychev polynomial of order $k$ and $s$ is a random variable obeying the semicircle law.
\end{thm}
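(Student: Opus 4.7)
The plan is to combine Theorem~\ref{Tuno} (applied with $d=N\sigma$) with the free central limit theorem. The strategy proceeds in three steps: (i) show that the vacuum moments of $A^{[*N,k]}$ in $G^{*N}$ agree asymptotically, under the normalization $(N\sigma)^{-k/2}$, with those of the distance-$k$ graph adjacency matrix of the $(N\sigma)$-regular tree; (ii) apply Theorem~\ref{Tuno} to identify the latter with $T_k(b_{N\sigma})$, where $b_{N\sigma}$ has the Kesten--McKay distribution $\mu_{N\sigma}$; and (iii) take $N\to\infty$, using the convergence of $\mu_{N\sigma}$ (rescaled) to the semicircle to conclude.

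For step~(i), the key structural fact is that $G^{*N}$ is ``tree-like'' at $e$: a closed walk in $G^{*N}$ based at $e$ decomposes uniquely into a sequence of excursions, each excursion occurring inside a single copy $G_{i}$ and returning to $e$ before the next one begins. Consequently, each distance-$k$ jump from a vertex is a geodesic that crosses some sequence of copies, and a closed walk of length $r$ in the distance-$k$ graph is a concatenation of $r$ such geodesics. Under the normalization $(N\sigma)^{-kr/2}$ of the $r$-th vacuum moment, only walks whose overall block skeleton forms a plane tree and whose intra-block structure is minimal survive; walks that make nontrivial detours inside a single copy are suppressed by factors of $1/N$, for the same combinatorial reason that underlies the free CLT statement $A^{*N}/\sqrt{N\sigma}\to s$. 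This reduces the problem to one on the $(N\sigma)$-regular tree.

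Given this reduction, step~(ii) is immediate from Theorem~\ref{Tuno}, yielding
\[
(N\sigma)^{-k/2}A^{[*N,k]}\sim(N\sigma)^{-k/2}T_k(b_{N\sigma})
\]
asymptotically in vacuum distribution. For step~(iii), one invokes the classical fact that $b_d/\sqrt{d-1}$ converges weakly to the semicircle $s$ on $[-2,2]$ as $d\to\infty$; together with $\sqrt{(d-1)/d}\to 1$ and $1/\sqrt{d(d-1)}\to 0$ in the formula defining $T_k$, a short asymptotic analysis gives $(N\sigma)^{-k/2}T_k(b_{N\sigma})\to P_k(s)$ in distribution, which is the desired limit.

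The main obstacle is step~(i): justifying rigorously that, at the level of vacuum moments of the distance-$k$ graph, $G^{*N}$ may be replaced by the $(N\sigma)$-regular tree. Implementing this requires expanding $\langle(A^{[*N,k]})^{r}\delta_{e},\delta_{e}\rangle$ as a combinatorial sum over closed walks of total length $rk$ in $G^{*N}$ (organized into $r$ distance-$k$ jumps), classifying walks by the plane tree of visited copies, and checking that walks exploiting nontrivial intra-block structure contribute $O(N^{-1})$ relative to the tree-like walks. This combinatorial argument is close in spirit to Voiculescu's analysis behind the free CLT, but needs adaptation to the distance-$k$ setting where a single jump can itself involve several block transitions.
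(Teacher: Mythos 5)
Your steps (ii) and (iii) are sound: the rescaled Kesten--McKay law $\mu_d$ does converge to the semicircle, and the limit $(N\sigma)^{-k/2}T_k(b_{N\sigma})\to P_k(s)$ is exactly the content of the unnumbered theorem at the end of Section 3 of the paper, so that part of your plan amounts to citing a result the authors already prove for the $d$-regular tree. The problem is step (i), which you correctly identify as ``the main obstacle'' but then leave entirely unproved --- and step (i) is not a technical reduction, it \emph{is} the theorem. Given that the tree case is already known, the assertion that the vacuum moments of the distance-$k$ graph of $G^{*N}$ agree asymptotically with those of the $(N\sigma)$-regular tree is equivalent to the statement of Theorem~\ref{T2}; no argument is supplied for it beyond the heuristic that intra-copy detours are suppressed by $1/N$. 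Moreover, the one structural claim you do make in support of it is false as stated: a closed walk in $G^{*N}$ based at $e$ does \emph{not} decompose into excursions each contained in a single copy $G_i$ --- an excursion is rooted in one copy but may enter arbitrarily many other copies attached at its intermediate vertices. This is a symptom of the real difficulties your sketch glosses over: (a) a distance-$k$ edge of $G^{*N}$ may be realized by several geodesics, so passing between ``closed walks in the distance-$k$ graph'' and ``closed walks of length $rk$ in $G^{*N}$'' introduces multiplicities that must be shown to be asymptotically $1$; (b) within a single copy of $G$ distances do not add as in a tree (cycles shorten them), so the set of vertices at distance exactly $k$ is not the naive one; (c) consecutive jumps can partially overlap inside a copy. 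None of these occur for the regular tree, so the comparison cannot be waved through.

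For contrast, the paper does not reduce to the tree at all. It proves the exact identity of Theorem~\ref{teodescomp},
\begin{equation*}
A^{[k]}A=\tilde A^{[k+1]}+(N-1)\mathrm{deg}(e)A^{[k-1]}+D^{[k-1]}_N +\Delta^{[k]}_N,
\end{equation*}
valid for every finite $N$, and then inducts on $k$: the entrywise order $A\succeq B$ of Definition~\ref{order}, domination of the error matrices by (powers of) adjacency matrices of suitably extended graphs $G_{ext(k)}$, and the free CLT (Theorem~\ref{fcltg}) are used to kill $\Delta_N^{[k]}$, $D_N^{[k-1]}$ and $\tilde A^{[k+1]}-A^{[k+1]}$ after normalization by $N^{(k+1)/2}$ (Lemma~\ref{6.2}, Corollary~\ref{6.4}, Lemma~\ref{6.5}), so that in the limit $A^{[k]}$ satisfies the Chebyshev recurrence. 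That recurrence-plus-induction device is precisely what resolves the multiplicity and intra-copy issues one level at a time; your plan needs an equivalent mechanism (or essentially this one) before step (i) can be considered established.
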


Finally, our third theorem considers the asymptotic spectral distribution of the distance-$k$ graph of $d$-regular random graphs. 

\begin{thm}\label{T3}
 Let $d$, $k$ be fixed integers and, for each $n$, let $F_n(x)$  be the expected eigenvalue distribution of the distance-$k$ graph of a random regular graph with degree $d$ and order $2n$.  Then, as $n$ tends to infinity, $F_n(x)$ converges to the distribution of $A^{[k]}_d$  with respect to the vacuum state, described in Theorem 1.1.
\end{thm}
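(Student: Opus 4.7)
The plan is to use the method of moments. By the invariance of the uniform random $d$-regular graph distribution under vertex permutations, the $m$-th moment of $F_n$ satisfies
\begin{equation*}
M_m^{(n)} := \int x^m\,dF_n(x) = \frac{1}{2n}\,\E[\mathrm{tr}((A^{[k]})^m)] = \E[(A^{[k]})^m_{v,v}]
\end{equation*}
for any fixed vertex $v$, while the $m$-th moment of the vacuum-state distribution of $A^{[k]}_d$ from Theorem~\ref{Tuno} is $M_m^{\infty} = (A^{[k]}_d)^m_{e,e}$, where $e$ is the origin of the $d$-regular tree. Since $A^{[k]}_d$ is bounded in operator norm, the target distribution is compactly supported, so it suffices by the method of moments to prove $M_m^{(n)} \to M_m^{\infty}$ for every fixed $m$.

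The key observation is locality: a closed walk $v = v_0, v_1, \ldots, v_m = v$ in the distance-$k$ graph satisfies $d(v_{i-1},v_i) = k$ in the underlying graph and is therefore confined to the ball $B(v,mk)$. Hence $(A^{[k]})^m_{v,v}$ depends only on the rooted isomorphism class of $(B(v,mk),v)$. For a uniform random $d$-regular graph on $2n$ vertices, a classical configuration-model argument shows that for any fixed $r$ the probability that $B(v,r)$ fails to be isomorphic to the corresponding ball in the $d$-regular tree tends to $0$ as $n\to\infty$. On this "good" event, $(A^{[k]})^m_{v,v}$ equals the tree quantity $M_m^{\infty}$ exactly. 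To pass to expectations I use the deterministic bound $(A^{[k]})^m_{v,v} \le (d(d-1)^{k-1})^m$ (each vertex of a $d$-regular graph has at most $d(d-1)^{k-1}$ neighbors at distance $k$); combined with the vanishing failure probability, this gives $M_m^{(n)} \to M_m^{\infty}$.

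The main obstacle will be the local-tree-structure estimate: one must verify that, for the uniform random $d$-regular graph model on $2n$ vertices, the probability of seeing any cycle inside the ball $B(v,mk)$ around a fixed vertex $v$ tends to $0$ as $n\to\infty$. This is classical (short-cycle counts in the configuration model are asymptotically Poisson with bounded means, and this transfers to the uniform simple $d$-regular graph), but must be invoked with some care. A clean alternative is to appeal to Benjamini--Schramm local weak convergence of uniform random $d$-regular graphs to the $d$-regular tree, from which the required convergence of local-walk statistics follows immediately.
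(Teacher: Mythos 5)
Your proof is correct, and it rests on the same core idea as the paper's --- McKay's locality argument combined with the method of moments, which applies because $A^{[k]}_d$ is a bounded operator, so its vacuum distribution is compactly supported and moment-determined --- but the route is organized differently. The paper first proves a \emph{deterministic} intermediate result (Theorem \ref{d-regular graphs}): for any sequence of $d$-regular graphs $X_i$ with $n(X_i)\to\infty$ and $c_j(X_i)/n(X_i)\to 0$ for all $j\ge 3$, the normalized-trace distribution of $A^{[k]}(X_i)$ converges to the vacuum distribution of $A^{[k]}_d$; the error there is controlled by counting, within a single large graph, the fraction of vertices whose ball of radius $r=mk$ is acyclic. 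The random statement is then deduced by invoking Wormald's theorem on the convergence of the \emph{average} number of short cycles, together with the device of replacing the expected eigenvalue distribution by the genuine eigenvalue distribution of the disjoint union of all labeled $d$-regular graphs of the given order. You instead fix a single root $v$, use exchangeability to write the $m$-th moment of $F_n$ as $\E\bigl[(A^{[k]})^m_{v,v}\bigr]$, and control the randomness directly through the probability that $B(v,mk)$ is tree-like (configuration model, or Benjamini--Schramm convergence to the $d$-regular tree), with the uniform bound $(A^{[k]})^m_{v,v}\le (d(d-1)^{k-1})^m$ absorbing the bad event. Your locality claim is exactly right as stated: any path of length at most $k$ between consecutive vertices of the walk stays inside $B(v,mk)$, so the distance-$k$ adjacencies along the walk are determined by that ball. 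The trade-off is that your argument is shorter and avoids both Wormald's lemma and the disjoint-union device, but it does not yield the deterministic Theorem \ref{d-regular graphs} as a by-product of independent interest, and it substitutes one classical cycle-count input (vanishing probability of a short cycle through a fixed vertex) for another (convergence of mean short-cycle counts); the two inputs are of comparable depth and both ultimately come from the same configuration-model computation.
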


  Apart from this introduction the paper is organized as follows. In Section 2 we give the basic preliminaries on graphs, orthogonal polynomials and Non-Commutative Probability and Kesten-McKay distributions.  Section 3  is devoted to prove Theorem \ref{T1}.  We prove Theorem \ref{T2} in Sections 4 and 5. Section 4 considers the case $k=2$, while Section 5 considers the case $k\geq3$. Finally, in Section 6 we use the results of Section 3 to prove Theorem \ref{T3}. 

\section{Preliminaries}

In this section we give very basic preliminaries on graphs, free product graphs, orthogonal polynomials, Jacobi parameters and non-commutative
probability. The reader familiar with these objects may skip this section.

\subsection{Graphs}
By a \textit{rooted graph} we understand a pair $(\mathcal{G},e)$, where $\mathcal{G}=(V,E)$, is a undirected graph with set of vertices
$V=V(\mathcal{G})$, and the set of edges $E=E(\mathcal{G})\subseteq \{(x,x'):x,\ x'\in V,\ x\neq x'\}$ and $e\in V$ is
a distinguished vertex called the \textit{root}. For rooted graphs we will use the notation $V^0=V\backslash \{e\}$.
Two vertices $x,x'\in V$ are called \textit{adjacent} if $(x,x')\in E$,  i.e.
vertices $x,x'$ are connected with an edge. Then we write $x \sim x'$.
Simple graphs have no loops, i.e. $(x,x)\notin E$ for all $x\in V$. A graph is called \textit{finite} if $|V|<\infty$.
The \textit{degree} of $x\in V$ is defined by $\kappa(x)=|\{x'\in V:x'\sim x\}|$,
where $|I|$ stands for the cardinality of $I$.
A graph is called \textit{locally finite} if $\kappa(x)<\infty$ for every $x\in V$.
It is called \textit{uniformly locally finite} if ${\rm sup}\{\kappa(x): x\in V\}<\infty$.

We define the \textit{free product} of the rooted vertex sets $(V_i,e_i)$,
 $i\in I$, where $I$ is a countable set, by the rooted set $(*_{i\in I}V_i, e)$, where
$$*_{i\in I}V_i=\{e\}\cup \{v_1 v_2 \cdots v_m : v_k\in V_{i_k}^0,\ \text{and}\ i_1\neq i_2\neq \cdots \neq i_m,\ m\in \mathbb{N}\},$$
and $e$ is the empty word.

\begin{defi}
 The \textit{free product of rooted graph} $(\mathcal{G}_i,e_i),\ i\in I$, is defined by the rooted graph $(*_{i\in I}\mathcal{G}_i,e)$
 with vertex set $*_{i\in I}V_i$ and edge set $*_{i\in I}E_i$, defined by
 $$*_{i\in I}E_i:= \{(vu,v'u):(v,v')\in \bigcup_{i\in I}E_i\ \text{and}\ u,\ vu,\ v'u\in *_{i\in I}V_i\}.$$
 We denote this product by $*_{i\in I}(\mathcal{G}_i,e_i)$ or $*_{i\in I}\mathcal{G}$ if no confusion arises.
If $I=[n]$, we denote by $G^{*n}=(*_{i\in I}G,e)$.
\end{defi}

Notice that for a fixed word $u=v_1 v_2 \cdots v_m$ with $j\in I$ with $v_1\notin V_j$ the subgraph of $(*_{i\in I}\mathcal{G_i}_i,e)$  induced by the vertex set $\{wu: w\in V_{j}\}$ is isomorphic to $G_j$. This motivates the following definition
\begin{defi}If $x,y\in*_{i\in I}V_i$, we say that $x$ and $y$ are in the same copy of $G_i$ if $x=vu$ and $y=v'u$ for some $u\in*_{i\in I}V_i$ and $v,v'\in V_j^{0}$ for some $j\in I$.
\end{defi}

For a given graph $G=(V,E)$, its \textit{distance-$k$ graph} $G^{[k]}=(V,E^{[k]})$ is 
defined by $$E^{[k]}=\{(x,y):x,y\in V,\ \partial_G(x,y)=k\}.$$

For $x\in V$, let $\delta(x)$ be the indicator function of the one-element set $\{x\}$.
Then $\{\delta(x),\,~x\in V\}$ is an orthonormal basis of the Hilbert space $l^{2}(V)$ of
square integrable functions on the set $V$, with the usual inner product.

The \textit{adjacency matrix} $A=A({\mathcal G})$ of ${\mathcal G}$ is a 0-1 matrix defined by
\begin{equation}
A_{x,x'}=\left\{
\begin{array}{ll}
1 & {\rm if} \;\; x\sim x'\\
0 & {\rm otherwise.}
\end{array}
\right.
\end{equation}
We identify $A$ with the densely defined symmetric operator on $l^{2}(V)$ defined by
\begin{equation}
A\delta(x)=\sum_{x\sim x'}\delta(x')
\end{equation}
for $x\in V$. Notice that the sum on the right-hand-side is finite since our graph is assumed
to be locally finite. It is known that $A(\mathcal{G})$ is bounded if and only if ${\mathcal G}$ is uniformly
locally finite. If $A(\mathcal{G})$ is essentially self-adjoint, its closure is called the
\textit{adjacency operator} of ${\mathcal G}$ and its spectrum is called the spectrum of ${\mathcal G}$.

The unital algebra generated by $A$, i.e. the algebra of polynomials in $A$, is called
the \textit{adjacency algebra} of ${\mathcal G}$ and is denoted by ${\mathcal A}({\mathcal G})$ or simply
${\mathcal A}$.

\subsection{Orthogonal Polynomials and The Jacobi Parameters}

Let $\mu$ be a probability measure with all moments, that is $m_n(\mu):=\int_{\mathbb{R}}|x^{n}|\mu (dx)<\infty$. The Jacobi parameters  $\gamma _{m}=\gamma _{m}(\mu )\geq 0,\beta
_{m}=\beta _{m}(\mu )\in\mathbb{R}$, are defined by the recursion 
\begin{equation*}
xQ_{m}(x)=Q_{m+1}(x)+\beta _{m}Q_{m}(x)+\gamma _{m-1}Q_{m-1}(x),
\end{equation*}
where the polynomials  $Q_{-1}(x)=0,$ $Q_{0}(x)=1$ and $(Q_{m})_{m\geq 0}$ is a sequence of orthogonal monic polynomials with respect to $\mu $, that is,
\begin{equation*}
\int_{\mathbb{R}}Q_{m}(x)Q_{n}(x)\mu (dx)=0\text{ \  \  \  \ if }m\neq n.
\end{equation*}

\begin{exa} The Chebyshev polynomials of the second kind are defined by the recurrence relation
$$P_0(x)=1,\ \ \ P_1(x)=x,$$
and
\begin{equation}\label{chev} xP_n(x)=P_{n+1}(x)+P_{n-1}(x)\ \ \ \forall n\geq 1.\end{equation}
These polynomials are orthogonal with respect to the semicircular law, which is defined by the density
$$\mathbf{d}\mu=\frac{1}{2\pi}\sqrt{4-x^2}\mathbf{d}x.$$ The Jacobi parameters of $\mu$ are $\beta_m=0$ and $\gamma_m=1$ for all $m\geq0$.
\end{exa}

\subsection{Non-Commutative Probability Spaces}

A $C^*$\textit{-probability space} is a pair $(\mathcal{A},\varphi)$, where $\mathcal{A}$ is a unital $C^*$-algebra and $\varphi:\mathcal{A}\to\mathbb{C}$ is a positive unital linear functional. The elements of $\mathcal{A}$ are called (non-commutative) random variables. An element $a\in\mathcal{A}$ such that $a=a^*$ is called self-adjoint.

The functional $\varphi$ should be understood as the expectation in classical probability.

 For $a_1,\dots,a_k\in \mathcal{A}$, we will refer to the values of $\varphi(a_{i_1}\cdots a_{i_n})$, $1\leq i_1,...i_{n}\leq k$, $n\geq1$, as the \textit{joint moments} of $a_1,\dots,a_k$.  If there exists $1\leq m,l,\leq n$ with $i(m)\neq i(l)$ we call it a \textit{mixed moment}.

For any self-adjoint element $a\in\mathcal{A}$ there exists a unique probability measure $\mu_a$ (its spectral distribution) with the same moments as $a$, that is, $$\int_{\mathbb{R}}x^{k}\mu_a (dx)=\varphi (a^{k}), \quad \forall k\in \mathbb{N}.$$

We say that a sequence $a_n\in\mathcal{A}_n$ \emph{converges in distribution} to $a\in\mathcal{A}$ if $\mu_{a_n}$ converges in distribution to $\mu_a$. 
In this setting convergence in distribution is replaced by convergence in moments. Let $(\phi_n,\mathcal{A}_n)$ be a sequence of $C^*$-probability spaces and let $a\in(\mathcal{A},\varphi)$ be a selfadjoint random variable. We say that the sequence $a_n\in(\phi_n,\mathcal{A}_n)$ of selfadjoint random variables converges to $a$ \emph{in moments} if
$$\lim_{n\to \infty} \phi_n(a_n^k)=\phi(a^k) ~~\text{for all } k\in\mathbb{N}.$$
If $a$ is bounded then convergence in moments implies convergence in distribution.

The following proposition is straightforward and will be used frequently in the paper. A sequence of polynomials $\{P_n=\sum^l_{i=0}c_{n,i} x^i\}_{n>0}$ of degree at most $l\geq k$ is said to converge to a polynomial $P=\sum^k_{i=0}c_i x^i$ of degree $k$ if $c_{i,n}\to c_i$ for $0\leq i\leq k$ and $c_{i,n}\to 0$ for $k<i\leq l$.

\begin{prop}\label{convegence} Suppose that the the sequence of random variables $\{a_n\}_{n>0}$ converges in moments to $a$ and the sequence of polynomials $\{P_n\}_{n>0}$ converges to P. Then the random variables $P_n(a_n)$ converges to $P(a)$.
\end{prop}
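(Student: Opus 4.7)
The plan is to unwind the definition of convergence in moments and use linearity of each $\phi_n$, so that everything reduces to term-by-term limits of coefficients against moments of $a_n$.

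First I would reduce to a single-polynomial statement: if $Q_n=\sum_{i=0}^{\ell} d_{n,i}x^{i}$ is a sequence of polynomials of degree at most $\ell$ converging (in the coefficient sense) to $Q=\sum_{i=0}^{k}d_i x^i$ with $k\le \ell$, then $\phi_n(Q_n(a_n))\to \phi(Q(a))$. By linearity,
\begin{equation*}
\phi_n(Q_n(a_n))=\sum_{i=0}^{\ell} d_{n,i}\,\phi_n(a_n^{\,i}).
\end{equation*}
The assumption that $a_n$ converges in moments to $a$ gives $\phi_n(a_n^{\,i})\to \phi(a^{\,i})$ for every $i$, and in particular each sequence $\{\phi_n(a_n^{\,i})\}_n$ is bounded. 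Since the sum has a fixed number of terms (at most $\ell+1$), I can pass to the limit inside the sum; using $d_{n,i}\to d_i$ for $i\le k$ and $d_{n,i}\to 0$ for $k<i\le \ell$, the right hand side converges to $\sum_{i=0}^{k} d_i \phi(a^{\,i})=\phi(Q(a))$.

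Next I would apply this reduction with $Q_n=P_n^{\,m}$ and $Q=P^{\,m}$ for arbitrary $m\in \mathbb{N}$. The coefficients of $P_n^{\,m}$ are fixed polynomial expressions in the coefficients of $P_n$; by continuity of these expressions, coefficient-wise convergence $P_n\to P$ forces $P_n^{\,m}\to P^{\,m}$ in the same coefficient sense (coefficients up to degree $m\deg P$ converge to the corresponding coefficients of $P^{\,m}$, while coefficients of higher degree tend to $0$ because they are built from monomials that include at least one factor $d_{n,i}$ with $i>\deg P$). Combining with the previous paragraph, $\phi_n\bigl(P_n(a_n)^m\bigr)=\phi_n(P_n^{\,m}(a_n))\to \phi(P^{\,m}(a))=\phi(P(a)^m)$ for every $m$, which is exactly convergence of $P_n(a_n)$ to $P(a)$ in moments.

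There is no substantial obstacle here: the statement is essentially the continuity of the map $(\text{moment sequence},\text{polynomial})\mapsto \text{moment sequence of }P(a)$, and the only point to double-check is that no issue arises from the fact that $\deg P_n$ may exceed $\deg P$. That is handled cleanly because the sums involved are finite of a bounded length and the extra coefficients tend to zero.
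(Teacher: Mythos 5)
Your argument is correct and complete: linearity of $\phi_n$, boundedness of the convergent moment sequences $\{\phi_n(a_n^i)\}_n$, and coefficient-wise convergence of $P_n^m$ to $P^m$ (with the extra high-degree coefficients tending to zero) give convergence of every moment of $P_n(a_n)$ to that of $P(a)$. The paper offers no proof at all, dismissing the proposition as straightforward, and your write-up is exactly the intended standard argument, including the one genuinely delicate point (handling $\deg P_n>\deg P$).
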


In this work we will only consider the $C^*$-probability spaces $(\mathcal{M}_n,\varphi_1)$, where $\mathcal{M}_n$ is the set of matrices of size $n\times n$ and for a matrix $M\in\mathcal{M}_n$ the functional $\varphi_{1}$ evaluated in $M$ is given by $$\varphi_{1}(M)=M_{11}.$$

Let $G = (V,E,1)$ be a finite rooted graph with vertex set $\{1, ..., n\}$ and let $A_G$ be the adjacency matrix. We denote by $A(G)\subset \mathcal{M}_n$
be the adjacency algebra, i.e., the $*$-algebra generated by $A_G$. 

It is easy to see that the $k$-th moment of $A$ with respect to the $\varphi_{1}$  is given the the number of walks in $G$ of size $k$ starting and ending at the vertex $1$. That is, 
$$\varphi_{1}(A^k)=|\{(v_1,...,v_k): v_1=v_k=1 ~ and ~ (v_i,v_{i+1})\in E\}|.$$

Thus one can get combinatorial information of $G$ from the values of $\varphi_1$ in elements of $A(G)$ and vice versa.

Let us  recall the free central limit theorem for free product of graphs (see, e.g. \cite{ALS}) which follows from the usual free central limit theorem for random variables \cite{Voi}.
\begin{thm}[Free Central Limit Theorem for Graphs]\label{fcltg}
 Let $G=(V,E,e)$ be a finite connected graph. Let $A_N$ be the adjacency matrix of the $N$-fold free power $G^{*N}$, and let $\sigma$ be the number of neighbors of $e$ in the graph $G$. Then
 the distribution with respect to the vacuum state of $(N\sigma)^{-1/2}A_N$ converges in moments (and thus weakly) as $N\to\infty$ to the semicircular law.
\end{thm}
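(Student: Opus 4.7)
The plan is to realize $A_N$ as a sum of $N$ freely independent, identically distributed, centered random variables in the non-commutative probability space $(\mathcal{A}(G^{*N}),\varphi_e)$, and then invoke Voiculescu's free central limit theorem.

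First, I would decompose $A_N=\sum_{i=1}^N A_i$, where $A_i$ is the adjacency operator collecting only those edges of $G^{*N}$ that belong to the $i$-th factor: two words $u,u'$ are joined by $A_i$ exactly when $u=wv$, $u'=wv'$ with $v\sim v'$ in the $i$-th copy of $G$. Equivalently, $A_i$ is the image of $A_G$ under the canonical embedding of the $i$-th factor algebra into $\mathcal{A}(G^{*N})$.

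Second, I would appeal to the standard fact (due to Voiculescu in the operator-algebraic setting, and specialized to graphs in, e.g., \cite{ALS}) that the subalgebras generated by $A_1,\ldots,A_N$ are freely independent with respect to the vacuum state $\varphi_e$, and that the distribution of each $A_i$ in $(\mathcal{A}(G^{*N}),\varphi_e)$ coincides with the distribution of $A_G$ in $(\mathcal{A}(G),\varphi_e)$. The latter is immediate, because any walk in $G^{*N}$ from $e$ to $e$ using only $A_i$-edges never leaves the canonical copy of $G$ indexed by $i$ and so corresponds bijectively to a walk in $G$ from $e$ to $e$.

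Finally, since $G$ is simple, $\varphi_e(A_i)=0$ and $\varphi_e(A_i^2)=|\{v\in V:v\sim e\}|=\sigma$. Thus $A_1,\ldots,A_N$ are identically distributed, freely independent, centered random variables of variance $\sigma$, and Voiculescu's free central limit theorem yields convergence in moments of $(N\sigma)^{-1/2}A_N$ to the standard semicircle law; weak convergence follows because the semicircle distribution is compactly supported and hence determined by its moments. The main obstacle is the freeness assertion, which is the only non-trivial input from free probability and is proved by the standard alternating-words argument: one checks that any mixed moment of centered polynomials in $A_{i_1},A_{i_2},\ldots,A_{i_m}$ with $i_1\neq i_2\neq\cdots\neq i_m$ vanishes in the vacuum state, by tracking how such walks push outward in the free-product structure and are forced to return to $e$.
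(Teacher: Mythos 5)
Your proposal is correct and is exactly the route the paper intends: the paper gives no proof of Theorem \ref{fcltg}, merely recalling it from \cite{ALS} as a consequence of Voiculescu's free central limit theorem, and your argument (decompose $A_N$ into the $N$ factor adjacency operators, note they are identically distributed as $A_G$, centered with variance $\sigma=\deg(e)$, and freely independent in the vacuum state) is precisely the standard derivation those references supply. The one step you rightly flag as non-trivial, the freeness of the factor subalgebras under $\varphi_e$, is indeed the content of the cited literature and your sketch of the alternating-centered-words argument is the correct way to establish it.
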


For the rest of the paper we define an order which will become handy when estimating vanishing terms in Sections \ref{four} and \ref{five}.

\begin{defi} \label{order} Let $A$ and $B$ be matrices (possibly infinite), we define the order $A\succeq B$ if $A_{ij}\geq B_{ij}$ for all entries $ij$.
\end{defi}

\begin{rem}\label{order2} 1) $\varphi_1(A^k)\geq\varphi_1(B^k)$ if $A\succeq B$.\\
2) For $G_1$ and $G_2$ graphs with $n$ vertices, $G_2$ is a subgraph of $G_1$ iff $A_{G_1}\succeq A_{G_2}$.\\
3) If $A\succeq B$ and  $C\succeq D$ implies $AC\succeq BD$.
\end{rem}

\subsection{Kesten-McKay Distribution}\label{kestenmc}

As we know, by the free central limit theorem, if we have a sequence of $d$-regular trees, then the limiting spectral
distribution of the sequence, as $d\to\infty$, converges to a semicircular law. However, if $d$ is fixed, and we consider
a sequence of $d$-regular graphs, such that the number of vertices tends to infinity, then the limiting spectral distribution
is not semicircular. These limiting spectral distributions, which are known as the Kesten-McKay distributions,
were found by McKay \cite{Mc} while studying properties of $d$-regular graphs and by Kesten \cite{Kes} in his works on random walk on (free) groups.

Let $d\geq 2$ be an integer, we define Kesten-McKay distribution, $\mu_d$, by the density
\begin{equation}\label{kestendistribution} \mathbf{d}\mu_d=\frac{d\sqrt{4(d-1)-x^2}}{2\pi(d^2-x^2)}\mathbf{d}x.\end{equation}
The orthogonal polynomials and the Jacobi parameters of these distributions are well known. More precisely, for $d\geq 2$, the polynomials defined by
 $$T_0(x)=1,\ \ \ T_1(x)=x,$$
and the recurrence formula
\begin{equation}\label{Tk} xT_k(x)=T_{k+1}(x)+(d-1)T_{k-1}(x),\end{equation}
are orthogonal with respect to the distribution $\mu_d$. Thus, it follows that the Jacobi parameters of $\mu_d$ are given by
 $$\beta_m=0,\ \forall m\geq 0\ \ \ \text{and}\ \ \ \gamma_0=d,\ \gamma_n=d-1\ \forall n\geq 1.$$

\begin{rem}
 If we define the following polynomials
$$\tilde{T}_k(x)=\left\{\begin{array}{ll}1,&k=0\\ \sqrt{\frac{d-1}{d}}P_k(x)-\frac{1}{\sqrt{d(d-1)}}P_{k-2}(x),&k=1,2,3,\dots,\end{array}\right.$$
then, $T_k(x)=\tilde{T}_k(x/2\sqrt{d-1}).$
\end{rem}

In Section \ref{random} we will generalize the following theorem due to McKay \cite{Mc} which gives a connection between large $d$-regular graphs and Kesten-McKay distributions.

\begin{thm}
 Let $X_1,\ X_2,\ \dots$ be a sequence of regular graphs with degree $d\geq 2$ such that $n(X_i)\to\infty$ and $c_k(X_i)/n(X_i)\to 0$
 as $i\to\infty$ for each $k\geq 3$, where $n(X_i)$ is the order of $X_i$ and $c_k(X_i)$ is the number of $k$-cycles in $X_i$.
 Then, the limiting distribution for the eigenvalues $X_i$ as $i\to\infty$ is given by $\mu_d$.
\end{thm}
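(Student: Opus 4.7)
The plan is to use the method of moments. Since $\mu_d$ has compact support and is therefore moment-determined, it suffices to show that for each fixed $k \geq 0$ the $k$-th moment
$$\int x^k\, dF_n(x) = \frac{1}{n(X_i)} \mathrm{tr}\!\left(A_{X_i}^k\right) = \frac{1}{n(X_i)} \sum_{v \in V(X_i)} (A_{X_i}^k)_{vv}$$
converges to $m_k := \int x^k\, d\mu_d(x)$ as $i \to \infty$. The Jacobi parameters listed in Section \ref{kestenmc} are precisely those arising from the quantum decomposition of the adjacency operator of the $d$-regular tree $T_d$ at any fixed root, so $\mu_d$ is the spectral distribution of $T_d$ with respect to the vacuum state; by vertex-transitivity, $(A_{T_d}^k)_{vv} = m_k$ for every vertex $v$ of $T_d$.

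I would then partition the vertices of $X_i$ into \emph{good} ones---those for which the ball $B_{\lfloor k/2 \rfloor}(v) := \{w : \partial_{X_i}(v,w) \leq \lfloor k/2 \rfloor\}$ induces a tree in $X_i$---and \emph{bad} ones. Since any closed walk of length $k$ based at $v$ stays inside $B_{\lfloor k/2 \rfloor}(v)$, the rooted ball at a good $v$ is isomorphic to the corresponding rooted ball in $T_d$, and hence $(A_{X_i}^k)_{vv} = m_k$. For any vertex one has the crude bound $(A_{X_i}^k)_{vv} \leq d^k$, so combining the two contributions gives
$$\left| \frac{1}{n(X_i)} \mathrm{tr}(A_{X_i}^k) - m_k \right| \;\leq\; \frac{2 d^k}{n(X_i)}\, \bigl|\{\text{bad } v\}\bigr|.$$

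The final step is to bound the number of bad vertices in terms of short-cycle counts. A vertex $v$ is bad precisely when $B_{\lfloor k/2 \rfloor}(v)$ contains a non-tree edge; adding that edge to a spanning tree of the ball produces a cycle in $X_i$ of length at most $k+1$ all of whose vertices lie within distance $\lfloor k/2 \rfloor$ of $v$. Conversely, each cycle of length $j \leq k+1$ has at most $C_{d,k}$ vertices within distance $\lfloor k/2 \rfloor$ of it, for some constant $C_{d,k}$ depending only on $d$ and $k$. Hence $\bigl|\{\text{bad } v\}\bigr| \leq C_{d,k} \sum_{j=3}^{k+1} c_j(X_i)$, and the hypothesis $c_j(X_i)/n(X_i) \to 0$ for $j\geq 3$ forces the right-hand side of the previous display to zero.

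I expect the only real subtlety to be the combinatorial bookkeeping in this last step: one must verify that proximity to a short cycle really is the correct characterization of a bad vertex and that the constant $C_{d,k}$ is finite, which is where the fixed degree $d$ and fixed moment index $k$ are used. Everything else is the classical McKay argument transported into the non-commutative-probability language of the paper: tree-like closed walks on $X_i$ are counted by the $T_d$-moments, the short-cycle hypothesis annihilates the contribution of cycle-carrying walks, and moment-determinacy of $\mu_d$ upgrades moment convergence to weak convergence.
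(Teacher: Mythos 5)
Your proposal is correct and is essentially the same argument the paper relies on: this statement is quoted from McKay and the paper's own proof of its generalization (Theorem 6.1 in Section 6) uses exactly your scheme of moment convergence via closed-walk counts, splitting vertices into those with tree-like (acyclic) balls, identifying the good vertices' counts with the $d$-regular-tree moments, and bounding the bad vertices' contribution by $d^r$. The only difference is that you explicitly justify the step the paper asserts without proof, namely that $n_r(X_i)/n(X_i)\to 1$, by relating bad vertices to proximity to short cycles; that bookkeeping is sound and is indeed where the hypothesis $c_j(X_i)/n(X_i)\to 0$ enters.
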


\section{Distance-$k$ graph of $d$-regular trees}

The $d$-regular tree is the $d$-fold free product graph of $K_2$, the complete graph with two vertices. Before we consider asymptotic behavior of the general case of the free product of graphs, we study the distance-$k$ graph of a $d$-regular tree for fixed $d$ and $k$. This is an example where we can find the distribution with respect to the vacuum state in a closed form. Moreover, this example sheds light on the general case of the $d$-fold free product of graphs, in the same way as the $d$-dimensional cube was the leading example for investigations of the distance-$k$ graph of the $d$-fold Cartesian product of graphs (Kurihara \cite{K}).

As a warm up and base case, we calculate the distribution of the distance-$2$ graph with respect to the vacuum state.

For $d\geq2$,  let $A_d^{[k]}$ be the adjacency matrix of distance-$k$ graph of $d$-regular tree. We will sometimes omit the subindex $d$ in the notation and write $A^{[1]}=A$ . Then we have the following equality, which expresses $A^2$ in terms of the distance-$2$ graph and the identity
matrix (see Figure \ref{fig2}). :
\begin{equation}\label{second}A^2= {\color{blue}A_d^{[2]}}+{\color{red}dI}.\end{equation} 
Since $A_d^{[2]}=A^2-dI$ then the distribution is given by the law of  $x^2-d$, where $x$ is a random variable obeying the Kesten-McKay distribution of parameter $d$, $\mu_d$.

\begin{figure}[here]
\begin{center}
\includegraphics[height=5cm]{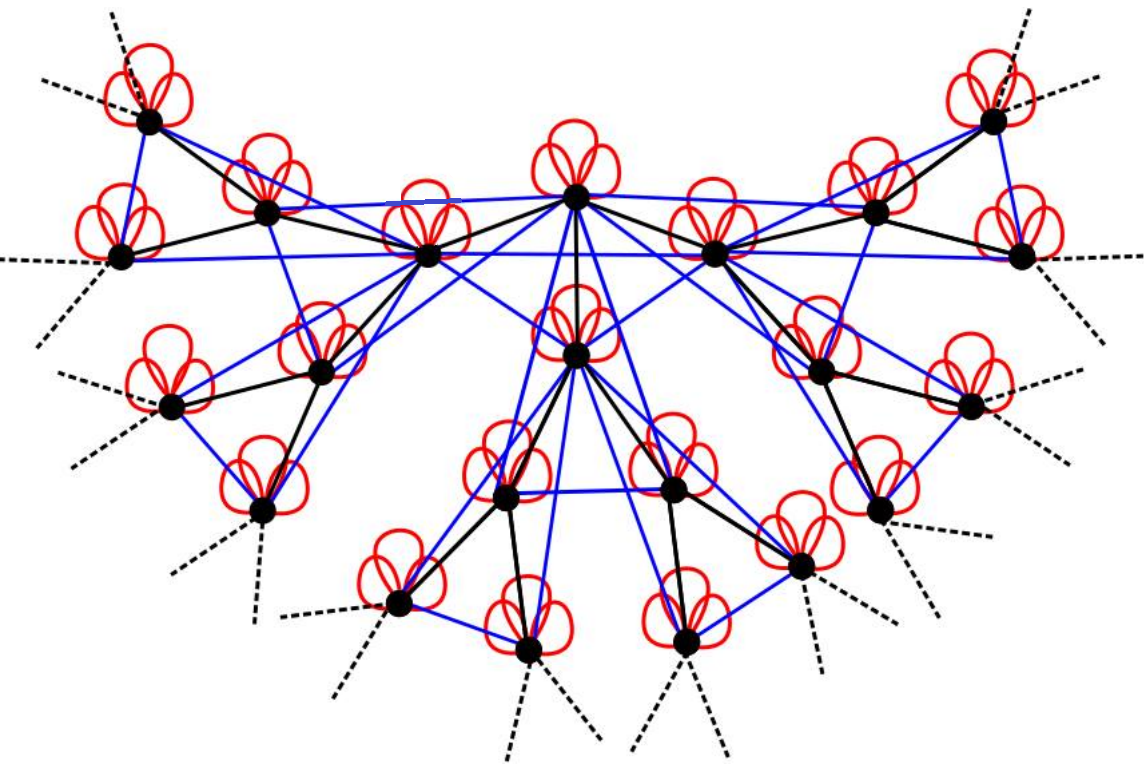}
\caption{\label{fig2} Graph of $A^2$ split in two parts $A^2= {\color{blue}A_d^{[2]}}+{\color{red}dI}$.   }
\end{center}
\end{figure}

For $k\geq 2$ we have the following recurrence formula.
\begin{lem}
Let $d\geq 1$ fixed, then $A^{[1]}=A$, $A^{[2]}=A^2-dI$, and
\begin{equation}\label{recurrence} AA^{[k]}=A^{[k+1]}+(d-1)A^{[k-1]}\ \ \ k=2,\ \dots, d-1.\end{equation}
\end{lem}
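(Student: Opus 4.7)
The base cases are immediate: $A^{[1]}=A$ holds by definition of the distance-$1$ graph, and $A^{[2]}=A^2-dI$ is exactly equation \eqref{second} which was established by the simple observation that a length-$2$ walk from a vertex $x$ either ends at $x$ itself (giving $d$ such walks by $d$-regularity) or at a distinct vertex at distance $2$ (giving a distance-$2$ edge). For the recurrence, the plan is a direct entrywise computation exploiting the unique-path property of trees.

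For $k\ge 2$ and arbitrary vertices $x,y$ of the $d$-regular tree, I would write
\begin{equation*}
(AA^{[k]})_{x,y}=\sum_{z} A_{x,z}\,A^{[k]}_{z,y}=\bigl|\{z\in V:\; z\sim x \text{ and } \partial(z,y)=k\}\bigr|.
\end{equation*}
Set $m=\partial(x,y)$. Because the tree is acyclic, exactly one neighbor of $x$ lies on the unique geodesic from $x$ to $y$ (and it is at distance $m-1$ from $y$), while each of the remaining $d-1$ neighbors sits at distance $m+1$ from $y$. Consequently, the only $m$ for which the above count is nonzero are $m=k+1$, contributing $1$, and $m=k-1$, contributing $d-1$. (Note the only reason we need $k\ge 2$ is to ensure that $m=k-1\ge 1$, so that $x\neq y$ in the second case; when $k=1$ the self-loop count at $x=y$ would give $d$ rather than $d-1$, which is precisely why the base case for $A^{[2]}$ takes the corrected form $A^2-dI$.)

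Comparing this with the entries of $A^{[k+1]}+(d-1)A^{[k-1]}$, which are $1$ exactly at pairs $(x,y)$ with $\partial(x,y)=k+1$ and $d-1$ exactly at pairs with $\partial(x,y)=k-1$, one obtains the desired matrix identity. The main obstacle, such as it is, is simply bookkeeping the boundary cases where $m$ is small; everything else is a consequence of the tree structure.
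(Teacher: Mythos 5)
Your proposal is correct and follows essentially the same route as the paper: an entrywise case analysis on $\partial(x,y)$ using the fact that in a tree exactly one neighbor of a vertex decreases the distance to $y$ while the remaining $d-1$ increase it. The only cosmetic difference is that you compute $AA^{[k]}$ directly by classifying the neighbors of $x$, whereas the paper computes $A^{[k]}A$ by classifying the neighbors of $j$ and then invokes the fact that $A^{[k]}$ is a polynomial in $A$ to commute the factors; your handling of the boundary case $x=y$ (explaining why $k\geq 2$ is needed) is a welcome extra detail.
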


\begin{proof}
Let $i$ and $j$ be vertices of the $d$-regular tree, $Y_d$. We have the following three cases.\\
\textbf{Case 1.} If $\partial (i,j)=k+1$ then $(A^{[k]}A)_{ij}=1$, that is because, in this case, there is only one way to get from vertex $j$ to 
vertex $i$. Indeed, since this $Y_d$ is a tree there is only one walk from $i$ to $j$ of size $k+1$ in $Y_d$. Thus, there is exactly one neighbor $l$ of $j$ at distance $k$ from $i$ and thus the only way to go across the distance-$k$ graph and after across $Y_d$ to reach $j$ is trough $l$.\\
\textbf{Case 2.} When we have $\partial(i,j)=k-1$, then $(A^{[k]}A)_{ij}=d-1$. In fact, for the vertex $i$ there are $d-1$ ways
to arrive to $j$ from a neighbor of $j$ at distance $k$ from $i$. Thus, if we are in vertex $i$, there are $d-1$ ways to travel across the distance-$k$ graph and finally go down one level in the $d$-regular tree to vertex $j$, .\\
\textbf{Case 3.} Suppose $|\partial(i,j)-k|\neq 1$, then $(A^{[k]}A)_{ij}=0$. To see this, we just note that, in the $d$-regular
tree we can go up one-level or go down one-level, after going across the distances-$k$ graph, this means that the distance between
$i$ and $j$ would be $k-1$ or $k+1$, which is a contradiction. Therefore if $|\partial(i,j)-k|\neq 1$, there is no way to go from
the vertex $i$ to the vertex $j$, going across the distance-$k$ graph and after, across the $d$-regular tree in one step.\\
Thanks to the above, we obtain the following recurrence formula
\begin{equation} \label{recurrence3}
A^{[k]}A=A^{[k+1]}+(d-1)A^{[k-1]}.
\end{equation}
From the equations (\ref{second}) and (\ref{recurrence3}) we can see that $A^{[k]}$ is a polynomial in $A$ for $k\geq 1$, and 
thus commutes with $A$. Then we can rewrite equation (\ref{recurrence3}) in the more convenient way as follows
\begin{equation*}AA^{[k]}=A^{[k+1]}+(d-1)A^{[k-1]}.\end{equation*}
\end{proof}

Now we can calculate the distribution of the distance-$k$ graph of the $d$-regular tree, for $d$ fixed, which is exactly Theorem \ref{Tuno}.
\begin{proof}[Proof of Theorem 1.1]

From equation (\ref{recurrence}) we can see that $A_d^{[k]}$ fulfills the same recurrence formula than $T_k$ in (\ref{Tk}). Since $A$ is distributed as the Kesten-McKay distribution $\mu_d$, we arrive to the conclusion.
\end{proof}

To end this section we observe  that from the considerations above, by letting $d$ approach infinity, we may find the asymptotic behavior of the distribution of the distance-$k$ graph of the $d$-regular tree. The same behavior is expected when changing the $d$-regular tree with the $d$-fold free product of any finite graph. We will prove this in Section \ref{five} of the paper.
 
\begin{thm}
 For $d\geq 2$, let $A^{[k]}_d$ be the adjacency matrix of the distance-$k$ graph of the $d$-regular tree. Then the distribution with
 respect to the vacuum state of $d^{-k/2}A^{[k]}_d$ converges in moments as $d\to\infty$ to the probability distribution of
 \begin{equation}
  P_k(s),
 \end{equation}
 where $P_k(s)$ is the Chebychev polynomial of degree $k$ and $s$ is a random variable obeying the semicircle law.
\end{thm}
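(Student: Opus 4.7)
The plan is to exploit the explicit polynomial description of $A_d^{[k]}$ provided by Theorem \ref{Tuno} together with the fact that, after the correct rescaling, the Kesten-McKay distribution converges to the semicircle. Concretely, from Theorem \ref{Tuno} (or, equivalently, from the recurrence (\ref{recurrence}) together with $A^{[2]}=A^2-dI$), the matrix $A_d^{[k]}$ is a monic polynomial $f_k(A)$ of degree $k$ in the adjacency matrix $A$, where $f_k=f_{k,d}$ is the sequence of orthogonal polynomials of the Kesten-McKay distribution $\mu_d$. Consequently, with respect to the vacuum state, $A_d^{[k]}$ has the same distribution as $f_k(b)$, $b\sim\mu_d$.

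Next I would rescale both the argument and the polynomial. Set
\begin{equation*}
\tilde b=\frac{b}{\sqrt{d-1}},\qquad \hat f_k(x)=d^{-k/2}f_k\bigl(\sqrt{d-1}\,x\bigr),
\end{equation*}
so that $d^{-k/2}A_d^{[k]}$ is distributed as $\hat f_k(\tilde b)$. Substituting $b=\sqrt{d-1}\,\tilde b$ into the recurrences $bf_1=f_2+d$ and $bf_k=f_{k+1}+(d-1)f_{k-1}$ ($k\ge 2$), and dividing by appropriate powers of $d$, one obtains
\begin{equation*}
\sqrt{\tfrac{d-1}{d}}\,x\,\hat f_1(x)=\hat f_2(x)+1,\qquad
\sqrt{\tfrac{d-1}{d}}\,x\,\hat f_k(x)=\hat f_{k+1}(x)+\tfrac{d-1}{d}\,\hat f_{k-1}(x),
\end{equation*}
with $\hat f_0=1$ and $\hat f_1(x)=\sqrt{(d-1)/d}\,x$. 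Letting $d\to\infty$, both recurrences collapse to the Chebyshev recurrence $xP_k=P_{k+1}+P_{k-1}$ of (\ref{chev}), with the same initial data. An easy induction on $k$ then shows that the coefficients of $\hat f_k$ converge to those of $P_k$ (in the sense preceding Proposition \ref{convegence}).

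On the probabilistic side, the scaled variable $\tilde b$ has Jacobi parameters $\tilde\beta_m=0$, $\tilde\gamma_0=d/(d-1)$ and $\tilde\gamma_m=1$ for $m\ge 1$, which converge to the Jacobi parameters $(0,1,1,\dots)$ of the standard semicircle law. Since convergence of Jacobi parameters implies convergence of moments, $\tilde b\to s$ in moments. Combining this with the coefficient-wise convergence $\hat f_k\to P_k$, Proposition \ref{convegence} gives $\hat f_k(\tilde b)\to P_k(s)$ in moments, which is exactly the claim.

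The only step that needs care is the bookkeeping in the rescaled recurrence; once one pulls out the right factors of $\sqrt{d-1}$ and $\sqrt d$, the Chebyshev recurrence appears immediately in the limit, and everything else is an application of Proposition \ref{convegence}. No genuine obstacle appears beyond this routine verification.
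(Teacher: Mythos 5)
Your proof is correct and follows essentially the same route as the paper: both rescale the three-term recurrence for $A_d^{[k]}$ (equivalently, for the orthogonal polynomials of $\mu_d$), observe that it degenerates to the Chebyshev recurrence (\ref{chev}) as $d\to\infty$, and conclude via Proposition \ref{convegence}. The only cosmetic difference is that you normalize the argument by $\sqrt{d-1}$ and deduce $\tilde b\to s$ from convergence of the Jacobi parameters of $\mu_d$, whereas the paper normalizes $A_d$ by $\sqrt{d}$ and invokes the free central limit theorem for graphs (Theorem \ref{fcltg}).
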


\begin{proof}
If we divide the equation (\ref{recurrence}) by $d^{(k+1)/2}$ we obtain
$$\frac{A_d}{d^{1/2}}\frac{A^{[k]}_d}{d^{k/2}}=\frac{A^{[k+1]}_d}{d^{(k+1)/2}}+\frac{A^{[k-1]}_d}{d^{(k-1)/2}}-\frac{1}{d}\frac{A^{[k-1]}_d}{d^{(k-1)/2}}$$
We write $X=\frac{A_d}{d^{1/2}}$, then we have
$$P^{(1)}(X)=X,\ \ \ P^{(2)}(X)=X^2-I,$$
and the recurrence
$$XP^{(k)}(X)=P^{(k+1)}(X)+P^{(k-1)}(X)-\frac{1}{d}P^{(k-1)}(X),$$
which when $d\to\infty$ becomes the recurrence formula $$XP^{(k)}(X)=P^{(k+1)}(X)+P^{(k-1)}(X).$$
Thus $P^{(k)}(x)$ and $P_k(x)$ satisfy the same recurrence formula asymptotically and 
thanks to the free central limit theorem for graphs (Theorem \ref{fcltg}) we have the convergence, $X\overset{m}\longrightarrow s$. Consequently, combining these two observations and using Lemma \ref{convegence} we obtain the proof. 
\end{proof}

\section{Distance-$2$ graph of free products} \label{four}

In this section we derive the asymptotic spectral distribution of the distance-$2$ graph of the $n$-free power of a graph when $n$ goes to infinity.

In order to analyze the distance-$2$ graphs we give a simple, but useful, decomposition of the square of the adjacency matrix.
\begin{lem}\label{decomp}
Let $G$ be a simple graph with adjacency matrix A, we have the following decomposition of $A^2$:
\begin{equation}\label{distance-2} A^2=\tilde A^{[2]}+D+\Delta,\end{equation}
where $D$ is diagonal with $(D)_{ii}=deg(i)$, $(\Delta)_{ij}=|\text{triangles in $G$ with one side $(i,j)$}|$ and $(\tilde A^{[2]})_{ij}=|\text{paths of size $2$ from $i$ to $j$}|$, whenever $(A^{[2]})_{ij}=1$ and $(\tilde A^{[2]})_{ij}=0$ if $(A^{[2]})_{ij}=0$.
\end{lem}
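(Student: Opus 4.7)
The plan is to verify the identity entrywise. The key combinatorial fact I would start from is the standard interpretation
\[
(A^2)_{ij} \;=\; \sum_{k} A_{ik}A_{kj} \;=\; |\{k : k\sim i \text{ and } k\sim j\}|,
\]
i.e.\ the number of length-$2$ walks from $i$ to $j$ in $G$. I would then partition the pairs $(i,j)$ according to the graph distance $\partial_G(i,j)$ and match each summand on the right-hand side of the claimed decomposition to the walks occurring in one specific distance regime.

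If $i=j$, a length-$2$ closed walk at $i$ visits any neighbor and returns, so $(A^2)_{ii}=\deg(i)=D_{ii}$; both $\Delta$ and $\tilde A^{[2]}$ vanish on the diagonal by definition. If $i\sim j$, any common neighbor $k$ of $i$ and $j$ together with the edge $(i,j)$ forms a triangle with side $(i,j)$ and third vertex $k$, so $(A^2)_{ij}$ equals the number of triangles containing the edge $(i,j)$, which is $\Delta_{ij}$; meanwhile $D_{ij}=0$ since $D$ is diagonal and $\tilde A^{[2]}_{ij}=0$ since $\partial(i,j)=1\neq 2$.

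Finally, if $\partial(i,j)\geq 2$, then $i\neq j$ and $i\not\sim j$, so any length-$2$ walk from $i$ to $j$ is automatically a simple path (the two edges are distinct and meet at a vertex different from $i$ and $j$). Hence $(A^2)_{ij}$ is exactly the number of length-$2$ paths from $i$ to $j$, which equals $(\tilde A^{[2]})_{ij}$ when $\partial(i,j)=2$ and $0$ otherwise; both $D_{ij}$ and $\Delta_{ij}$ contribute zero. Summing over these mutually exclusive cases gives the decomposition.

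There is no real obstacle: the three matrices $D$, $\Delta$, $\tilde A^{[2]}$ have pairwise disjoint supports (diagonal, distance-$1$ off-diagonal, distance-$2$ off-diagonal), so there is no risk of double-counting, and the proof reduces to unpacking what each entry counts combinatorially.
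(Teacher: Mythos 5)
Your proposal is correct and follows essentially the same route as the paper: both arguments interpret $(A^2)_{ij}$ as the number of length-$2$ walks from $i$ to $j$ and split into the cases $\partial(i,j)=0,1,2$, matching each case to $D$, $\Delta$, and $\tilde A^{[2]}$ respectively. Your explicit remark that the three matrices have pairwise disjoint supports is a slightly more careful packaging of the same observation the paper makes implicitly.
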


\begin{proof}Indeed $(A^2)_{ij}$ is zero if the distance between $i$ and $j$ is greater than $2$. So $(A^2)_{ij}>0$
implies that $\partial(i,j)=0,1$ or $2$. If $\partial(i,j)=0$ then $i=j$ and since $(A^2)_{ii}=deg(i)$ we get $D$,
a diagonal matrix with $(D)_{ii}=deg(i)$. Next, if $\partial(i,j)=1$ then each path of size $2$ which forms a triangle with
side $(i,j)$ will contribute to $(A^2)_{ij}=(\Delta)_{ij}$ where $(\Delta)_{ij}=|\text{triangles in $G$ with one side
$(i,j)$}|$. Finally if $\partial(i,j)=2$ then $(A^2)_{ij}$ equals the number of paths of size $2$ from $i$ to $j$, 
which is non-zero exactly when $(\tilde A^{[2]})_{ij}>0$. \end{proof}

\begin{rem}
Notice in Lemma \ref{decomp}, that when $G$ is a tree then $\Delta=0$, $\tilde{A}^{[2]}=A^{[2]}$, therefore $ A^{[2]}=A^2-D.$
\end{rem}

Let $G=(V,E,e)$ be a rooted graph, $A_n=A_{G^{*N}}$ and define $D_n$ and $\Delta_n$ by the decomposition (\ref{decomp})  applied to $G^{*N}=G*\cdots *G$, i.e. 
\begin{equation}\label{distance-2} A_n^2=\tilde A_n^{[2]}+D_n+\Delta_n.\end{equation}
We will describe the asymptotic behavior of each of these matrices. First, we consider the diagonal matrix $D_n$.
\begin{lem}\label{lema2} $D_n/n \to I deg(e)$ entrywise and in distribution w.r.t. the vacuum state. 
\end{lem}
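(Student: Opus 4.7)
The plan is to compute the degree of each vertex $v$ in $G^{*N}$ explicitly and then pass to the limit. Write $\sigma = \deg_G(e)$. The root $e$ sits at the root of each of the $N$ free factors of $G^{*N}$; each factor contributes $\sigma$ distinct neighbors (distinctness follows from the free product construction, since these neighbors lie in different $V_j^0$'s). Hence $(D_N)_{ee} = N\sigma$, and $(D_N/N)_{ee} = \sigma = \deg(e)$ exactly, for every $N$.

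For a non-root vertex $v = v_1 v_2 \cdots v_m$ (a reduced word with $v_1 \in V_{i_1}^0$), one enumerates the copies of $G$ inside $G^{*N}$ that contain $v$ via the decompositions $v = wu$ with $w \in V_j$ and the first letter of $u$ not in $V_j$. There are exactly two types: (i) $w = v_1$, $u = v_2 \cdots v_m$ (so $j = i_1$), giving one copy in which $v$ plays a non-root role and contributes $\deg_G(v_1)$ neighbors; and (ii) $w = e$, $u = v$ (so $j \neq i_1$), giving $N-1$ copies in which $v$ sits as the root and contributes $\sigma$ neighbors each. Summing yields
\begin{equation*}
(D_N)_{vv} = \deg_G(v_1) + (N-1)\sigma,
\end{equation*}
and dividing by $N$ this entry converges to $\sigma$ as $N \to \infty$, since $G$ finite implies $\deg_G(v_1)$ is bounded. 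This establishes the claimed entrywise convergence $D_N/N \to I\deg(e)$.

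For convergence with respect to the vacuum state, observe that $D_N$ is diagonal, so $(D_N/N)^k$ is diagonal with $(e,e)$ entry $\sigma^k$. Therefore $\varphi_1((D_N/N)^k) = \sigma^k = \varphi_1((I \deg(e))^k)$ for every $k$ and every $N$, and equality of all moments immediately gives convergence in distribution.

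The only delicate step is the degree count for non-root vertices: one has to see that each such $v$ is not merely in its own copy but also sits as the root of $N-1$ further copies branching off it, so that its degree likewise grows linearly in $N$. Once the copies are identified correctly via the decomposition $v=wu$, everything else is arithmetic.
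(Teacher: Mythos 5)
Your proof is correct and follows essentially the same route as the paper's: both compute $\deg_{G^{*N}}(v)$ as a bounded term plus $(N-1)\deg(e)$ and divide by $N$. You are somewhat more explicit than the paper (which simply writes $(D_n)_{ii}=c_i+(n-1)\deg(e)$ with $c_i$ bounded), and you spell out the vacuum-state step that the paper leaves implicit, but the underlying argument is identical.
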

\begin{proof}
For any $i\in G_n$ $(D_n)_{ii}=deg_{G_n}(i)=c_i+(n-1) deg(e)$ for some $0<c_i<max deg(G).$
Thus, $$\frac{(D_n)_{ii}}{n} =\frac{c_i}{n}+\frac{(n-1) deg(e)}{n}\to deg(e).$$
\end{proof}

In order to consider the other matrices in the decomposition we will use the order $\succeq$ from Definition \ref{order}.

\begin{lem} \label{vanishing moments} The mixed moments of   $A_n^2/n$ and $\Delta_n/n$ asymptotically vanish.
\end{lem}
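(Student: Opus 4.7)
The plan is to bound $\Delta_n$ entrywise by a constant multiple of $A_n$ and then exploit the discrepancy in normalizations via the free central limit theorem for graphs. Intuitively, triangles in $G^{*n}$ are much rarer than generic length-$2$ paths: the number of triangles through any vertex grows only linearly in $n$, whereas the number of length-$2$ paths grows quadratically.

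I would first prove the structural bound $\Delta_n \preceq c A_n$, where $c$ is the maximum, over edges of $G$, of the number of triangles through that edge. If $(i,j)$ is an edge of $G^{*n}$, then $i$ and $j$ lie in a unique common copy of $G$ (the copy responsible for the edge), and any vertex $k$ adjacent to both $i$ and $j$ must lie in that same copy. Indeed, were $k$ adjacent to $i$ through an edge inside a different copy, the letter in which $k$ differs from $i$ would be disjoint from the letter in which $j$ differs from $i$, so the words $k$ and $j$ would still differ in two positions and could not be adjacent. Hence every triangle through $(i,j)$ in $G^{*n}$ descends to a triangle through the corresponding edge in $G$, yielding $(\Delta_n)_{ij}\le c\,(A_n)_{ij}$ for all $i,j$.

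Given this bound, take any mixed monomial
\[
M \;=\; \prod_{s=1}^r (A_n^2)^{k_s}(\Delta_n)^{l_s},
\qquad \sum_s k_s \geq 1,\quad \sum_s l_s \geq 1.
\]
All factors have nonnegative entries, so by Remark \ref{order2} the order $\succeq$ is preserved under multiplication, giving
\[
M \;\preceq\; c^{\sum_s l_s}\, A_n^{\,2\sum_s k_s + \sum_s l_s}.
\]
Entrywise comparison implies comparison of $(1,1)$-entries, and the free central limit theorem for graphs (Theorem \ref{fcltg}) supplies $\varphi_1(A_n^m)=O(n^{m/2})$, so
\[
\varphi_1\!\left(\prod_{s=1}^r \Bigl(\tfrac{A_n^2}{n}\Bigr)^{k_s}\Bigl(\tfrac{\Delta_n}{n}\Bigr)^{l_s}\right) \;=\; \frac{\varphi_1(M)}{n^{\sum_s k_s+\sum_s l_s}} \;=\; O\bigl(n^{-\sum_s l_s/2}\bigr) \;\longrightarrow\; 0.
\]

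The main obstacle is the combinatorial triangle-counting step: one must carefully verify that triangles in the free product cannot span more than one copy of $G$, which requires an inspection of the various ways a vertex can sit in several copies simultaneously (once as a non-root letter, and as the root of the copies of the other factors). Once this structural fact is in place, the rest is mechanical bookkeeping with the ordering $\succeq$ and the polynomial moment bound on $A_n$ provided by the free CLT.
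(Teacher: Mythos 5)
Your proposal is correct and follows essentially the same route as the paper: bound $\Delta_n$ entrywise by a constant multiple of $A_n$ (using that the free product creates no new triangles), then compare mixed moments via the order $\succeq$ and conclude from the growth rate $\varphi_1(A_n^m)=O(n^{m/2})$ supplied by the free central limit theorem. You supply somewhat more detail than the paper on why triangles cannot span two copies of $G$, but the argument is the same.
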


\begin{proof}

Note that the free product does not generate new triangles other than the ones in copies of the original graph. Thus, for $c=\text{max deg}(G)$ the relation $cA_n\succeq\Delta_n$ holds. Hence, for $m_1,\ m_2,\ \dots,\ m_s,\ l_1,\ l_2,\ \dots,\ l_s\in \nn$  and $l_1>0$, from Remark \ref{order2}, we have that
\[\begin{split}&\ \varphi_1\left[\left(\frac{A_n^2}{n}\right)^{m_1}\left(\frac{\Delta_n}{n}\right)^{l_1}\cdots \left(\frac{A^{2}}{n}\right)^{m_s}\left(\frac{\Delta_n}{n}\right)^{l_s}\right]\\
&\leq c^{\sum_i l_i}\varphi_1\left[\left(\frac{A_n^2}{n}\right)^{m_1}\left(\frac{A}{n}\right)^{l_1}\cdots \left(\frac{A^{2}}{n}\right)^{m_s}\left(\frac{A}{n}\right)^{l_s}\right].
\end{split}
\]
From Theorem \ref{fcltg} we have that $A^2/n$ and $A/n^{1/2}$ converge, then the right hand side of the preceding inequality converges to zero as $n$ goes to infinity.
\end{proof}

Since $\tilde A_n^{[2]}$ and $D_n$ are subgraphs of $A_n^2$ we have the following.

\begin{cor}\label{cormix}
The mixed moments of the pairs  $(\tilde A_n^{[2]}/n$, $\Delta/n,)$  and $(D_n/n$, $\Delta/n,)$ asymptotically vanish.
\end{cor}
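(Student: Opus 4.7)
The strategy is to reduce the two claims to Lemma \ref{vanishing moments} by exploiting the entrywise order $\succeq$ of Definition \ref{order}. The decomposition \eqref{distance-2} reads $A_n^2 = \tilde A_n^{[2]} + D_n + \Delta_n$, and all three summands are matrices with non-negative integer entries. Hence
\begin{equation*}
A_n^2 \succeq \tilde A_n^{[2]}, \qquad A_n^2 \succeq D_n, \qquad A_n^2 \succeq \Delta_n.
\end{equation*}
In particular, dividing by $n$ preserves the order, so $A_n^2/n$ dominates each of $\tilde A_n^{[2]}/n$, $D_n/n$ and $\Delta_n/n$.

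Next, I would invoke Remark \ref{order2}(3) repeatedly: if $A \succeq B$ and $C \succeq D$ then $AC \succeq BD$. Applied inductively to any mixed word, this yields, for arbitrary $m_1,l_1,\dots,m_s,l_s \in \mathbb{N}$,
\begin{equation*}
\left(\frac{A_n^2}{n}\right)^{m_1}\!\left(\frac{\Delta_n}{n}\right)^{l_1}\!\cdots \left(\frac{A_n^2}{n}\right)^{m_s}\!\left(\frac{\Delta_n}{n}\right)^{l_s} \succeq \left(\frac{\tilde A_n^{[2]}}{n}\right)^{m_1}\!\left(\frac{\Delta_n}{n}\right)^{l_1}\!\cdots \left(\frac{\tilde A_n^{[2]}}{n}\right)^{m_s}\!\left(\frac{\Delta_n}{n}\right)^{l_s},
\end{equation*}
and the analogous inequality with $D_n$ in place of $\tilde A_n^{[2]}$. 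Since $\varphi_1(M) = M_{11}$ is monotone in $\succeq$ by Remark \ref{order2}(1), evaluating $\varphi_1$ preserves these inequalities; moreover, all entries in sight are non-negative, so both sides are non-negative.

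Finally, by Lemma \ref{vanishing moments}, the right-hand side of the original inequality (that is, the mixed moment with $A_n^2/n$ instead of $\tilde A_n^{[2]}/n$ or $D_n/n$) tends to $0$ as $n \to \infty$. By the squeeze argument (bounded below by $0$, above by a quantity tending to $0$), the mixed moments of $(\tilde A_n^{[2]}/n, \Delta_n/n)$ and of $(D_n/n, \Delta_n/n)$ vanish asymptotically, as claimed. I do not anticipate any real obstacle here: the entire content is the monotonicity of $\varphi_1$ under $\succeq$ together with the fact that $\tilde A_n^{[2]}$, $D_n$ and $\Delta_n$ appear as non-negative summands of the single dominating matrix $A_n^2$ for which the mixed vanishing was already established.
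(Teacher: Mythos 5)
Your argument is correct and is essentially the paper's own: the paper justifies the corollary in one line by noting that $\tilde A_n^{[2]}$ and $D_n$ are dominated by $A_n^2$ in the order $\succeq$ (being non-negative summands in the decomposition of $A_n^2$), so the mixed moments are squeezed between $0$ and the corresponding mixed moments with $A_n^2/n$, which vanish by Lemma \ref{vanishing moments}. Your write-up just makes the monotonicity-of-$\varphi_1$-under-products step explicit via Remark \ref{order2}.
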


Finally, we consider the matrix $\tilde A^{[2]}$. 
\begin{lem}\label{lemacon}  $\tilde A_n^{[2]}$ converges to $A_n^{[2]}$ as $n$ goes to infinity.
\end{lem}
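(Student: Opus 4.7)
The plan is to prove that $B_n := \tilde A_n^{[2]} - A_n^{[2]}$ vanishes in moments after the normalization by $n$ used throughout this section, so that $A_n^{[2]}/n$ and $\tilde A_n^{[2]}/n$ share the same asymptotic distribution with respect to $\varphi_1$. The proof proceeds in two steps: identify the support of $B_n$ from the tree-like structure of the free product, and then dominate $B_n$ in the $\succeq$-order by a matrix whose moments are governed by Theorem \ref{fcltg}.

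First, I claim that $(B_n)_{a,b}>0$ forces $a$ and $b$ to lie in a common copy of $G$ inside $G^{*n}$. Indeed, an entry of $B_n$ is nonzero exactly when $a$ and $b$ are at graph distance $2$ and joined by strictly more than one walk of length $2$. If $a$ and $b$ lie in distinct copies, the skeleton tree of the free product forces any length-$2$ walk between them to pass through the unique junction word at which the two copies meet, so there is at most one such walk. Verifying this requires a short case analysis on the reduced-word normal forms of $a$, $b$ and a candidate common neighbor, treating separately the subcases where $a$ or $b$ is itself a junction vertex; this combinatorial check is the main technical point of the lemma.

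Let $m=|V(G)|$ and $C=\max_{x,y\in V(G)}(\tilde A_G^{[2]}-A_G^{[2]})_{x,y}$, a finite constant. The vertex set of $K_m^{*n}$ may be identified with that of $G^{*n}$, and $(A_{K_m^{*n}})_{a,b}=1$ precisely when $a\neq b$ lie in a common copy. Combined with the support claim, this yields $B_n\preceq C\cdot A_{K_m^{*n}}$. Applying Theorem \ref{fcltg} to $K_m$, the normalization $A_{K_m^{*n}}/\sqrt{n(m-1)}$ converges in moments to a semicircular variable, whence $\varphi_1(A_{K_m^{*n}}^k)=O(n^{k/2})$ for every $k\geq 1$. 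Remark \ref{order2}(1) then gives $\varphi_1(B_n^k)\leq C^k\,\varphi_1(A_{K_m^{*n}}^k)=O(n^{k/2})$, so $\varphi_1((B_n/n)^k)=O(n^{-k/2})\to 0$.

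To promote this to the comparison of $k$-th moments of $A_n^{[2]}/n$ and $\tilde A_n^{[2]}/n$, expand $\tilde A_n^{[2]}=A_n^{[2]}+B_n$ inside the $k$-th power and consider each of the $2^k$ resulting products $X_1\cdots X_k$ with $X_i\in\{A_n^{[2]},B_n\}$. Since $A_n^{[2]}\preceq A_n^2\preceq A_{K_m^{*n}}^2$ (the second inequality uses $A_n\preceq A_{K_m^{*n}}$), Remark \ref{order2}(3) dominates any such product containing $b\geq 1$ factors of $B_n$ by a constant times $A_{K_m^{*n}}^{2k-b}$, whose $\varphi_1$-value is $O(n^{k-b/2})$. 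After dividing by $n^k$ the cross term is $O(n^{-b/2})\to 0$, so only $\varphi_1((A_n^{[2]}/n)^k)$ survives in the limit, showing that $A_n^{[2]}/n$ and $\tilde A_n^{[2]}/n$ have the same asymptotic moments with respect to $\varphi_1$.
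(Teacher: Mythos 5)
Your proof is correct, and it rests on the same underlying mechanism as the paper's: the excess matrix $\square_n=\tilde A_n^{[2]}-A_n^{[2]}$ is supported on pairs lying in a common copy of $G$, is entrywise bounded, and is therefore dominated in the $\succeq$-order by a constant multiple of the adjacency matrix of the free power of a \emph{fixed} finite graph, whose moments grow only like $n^{k/2}$ and hence die under the $n^{-k}$ normalization. The packaging, however, is genuinely different. The paper builds an extended graph $G(ext)$ by adding an edge for every pair with positive excess, observes that $\square_n\preceq \Delta_{G(ext)^{*n}}$ and $A_n^{[2]}\preceq A^2_{G(ext)^{*n}}$, and then simply re-invokes Lemma \ref{vanishing moments} for $G(ext)$ to kill all mixed moments in one stroke. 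You instead dominate $\square_n$ by $C\cdot A_{K_m^{*n}}$ with $K_m$ the complete graph on $|V(G)|$ vertices, extract the $O(n^{k/2})$ growth directly from Theorem \ref{fcltg}, and handle the cross terms by expanding $(A_n^{[2]}+\square_n)^k$ and counting powers via Remark \ref{order2}. Your route is longer but more self-contained and makes explicit two points the paper leaves implicit: the combinatorial fact that excess length-$2$ walks cannot cross between copies (which is also what justifies the paper's claim $\Delta_{G(ext)^{*n}}\succeq\square_n$), and the precise order-of-growth bookkeeping $O(n^{k-b/2})$ for a product with $b$ excess factors. The paper's route is shorter because the mixed-moment lemma has already been proved for an arbitrary base graph and so applies verbatim to $G(ext)$. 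One small caveat: your domination arguments implicitly use that all matrices involved are entrywise nonnegative, which is needed for Remark \ref{order2} to yield the moment inequalities; this holds here but is worth stating.
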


\begin{proof}

Observe that we can write $A_n^{[2]}$ as 
$$\tilde A_n^{[2]}=A_n^{[2]}+ \square_n,$$
where for $(i,j)$ at distance $2$ in $G^{*n}$,  the entry $(\square_n)_{ij}$ exceeds in one the number of vertices $k$ such that $i\sim k$ and $k\sim j$. 

We will extend $G$ in the following way. For each $(i,j)$ such that $\square_{ij}$ is positive we put the edge $ij$ and call this new graph $G(ext)$. Now notice that, by construction,  $\Delta_{G(ext)^{*n}}\succeq\square$ and $A_{G(ext)^{*n}}\succeq A_{G^{*n}}$ . Finally, by the previous lemma the mixed  moments of $\Delta_{G(ext)^{*n}}$ and  $A^2_{G(ext)^{*n}}$ asymptotically vanish.  But  $A^2_{G(ext)^{*n}}\succeq A_n^{[2]}$, so the  mixed  moments of  $A_n^{[2]}$ and $\square_n$ also vanish in the limit. This of course means that  $\tilde A_n^{[2]}$ and $A_n^{[2]}$ are asymptotically equal in distribution.

\end{proof}

We have shown that asymptotically $D_n/n$ approximates $I$, $\tilde A_n^{[2]}$ approximates $A_n^{[2]}$ and that the joint moments between
$\tilde A_n^{[2]}$ or $D_n$ and $\Delta_n$ vanish. Thus, we arrive to the following theorem.

\begin{thm}\label{k=2}
The asymptotic distributions of distance-$2$ graph of the $n$-fold free product graph, as $n$ tends to infinity, is given by the distribution of $s^2-1$, where $s$ is a semicircle. \end{thm}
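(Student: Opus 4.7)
The plan is to divide the decomposition (\ref{distance-2}) by $n\sigma$ and identify the limit of each of the three resulting terms using the lemmas already proved in this section. Rearranging gives
$$\frac{\tilde A_n^{[2]}}{n\sigma}=\frac{A_n^2}{n\sigma}-\frac{D_n}{n\sigma}-\frac{\Delta_n}{n\sigma},$$
and I expect these pieces to tend respectively to $s^2$, to the identity $I$, and to $0$, so that $\tilde A_n^{[2]}/(n\sigma)\to s^2-1$. Lemma \ref{lemacon} will then let me replace $\tilde A_n^{[2]}$ with the true distance-$2$ adjacency $A_n^{[2]}$ without changing the limit.

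The ingredients are all in place. First, Theorem \ref{fcltg} gives $A_n/\sqrt{n\sigma}\to s$ in moments, and by Proposition \ref{convegence} applied to $x\mapsto x^2$ this upgrades to $A_n^2/(n\sigma)\to s^2$. Second, Lemma \ref{lema2} yields $D_n/n\to\sigma I$; in fact, since $(D_n)_{ii}-(n-1)\sigma$ is bounded uniformly in $i$, one has $\|D_n/(n\sigma)-I\|=O(1/n)$ in operator norm, so $D_n/(n\sigma)$ is asymptotically a scalar. Third, Lemma \ref{vanishing moments} together with Corollary \ref{cormix} shows that any mixed moment at the vacuum state containing a factor of $\Delta_n/(n\sigma)$ tends to zero as $n\to\infty$.

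To assemble these into the theorem, I would expand $\varphi_1((\tilde A_n^{[2]}/(n\sigma))^k)$ via the decomposition as a sum of mixed moments of the three matrices, discard every term carrying a factor of $\Delta_n/(n\sigma)$ (which vanishes by the third ingredient), and in each surviving mixed moment substitute $D_n/(n\sigma)$ by $I$ with $o(1)$ error using the operator-norm bound. What remains is $\varphi_1((A_n^2/(n\sigma)-I)^k)$, which converges to the $k$-th moment of $s^2-1$. Applying Lemma \ref{lemacon} at the end transfers the conclusion to $A_n^{[2]}/(n\sigma)$.

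The main bookkeeping obstacle is the substitution $D_n/(n\sigma)\rightsquigarrow I$ inside non-commuting products with $A_n^2/(n\sigma)$: it requires the uniform bound $\|D_n/(n\sigma)-I\|=O(1/n)$ together with a uniform bound on the moments of $A_n^2/(n\sigma)$ (which follows from their convergence). With these in hand, each insertion of $D_n/(n\sigma)-I$ contributes a factor of $O(1/n)$ to the moment while the remaining factors stay bounded, so the substitution error is $o(1)$ and the argument closes.
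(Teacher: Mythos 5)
Your proposal is correct and takes essentially the same route as the paper's proof: the decomposition $A_n^2=\tilde A_n^{[2]}+D_n+\Delta_n$, Theorem \ref{fcltg} with Proposition \ref{convegence} for the $A_n^2$ term, Lemma \ref{lema2} for $D_n$, Corollary \ref{cormix} for discarding $\Delta_n$, and Lemma \ref{lemacon} to pass from $\tilde A_n^{[2]}$ to $A_n^{[2]}$. The only difference is that you make explicit the bookkeeping for substituting $D_n/(n\sigma)$ by $I$ inside non-commuting mixed moments, a step the paper leaves implicit.
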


\begin{proof}
 From the equation (\ref{distance-2}), and thanks to Lemmas \ref{convegence}, \ref{lema2}, \ref{lemacon},
 Corolary \ref{cormix} and Theorem \ref{fcltg} we have
 $$A_n^{[2]}\overset{D}\longrightarrow  \tilde A_n^{[2]}\overset{D}\longrightarrow  A_n^2-D_n-\Delta_n\overset{D}\longrightarrow A_n^2-I\overset{D}\longrightarrow s^2-1.$$
\end{proof}

\section{Distance-$k$ graphs of free products}\label{five}

This section contains the proof of Theorem \ref{T2} which describes the asymptotic behavior of the distance-$k$ graph of the $d$-fold free power of graphs. 
We will show that the adjacency matrix satisfies in the limit the recurrence formula (\ref{chev}).
 We start by showing a decomposition similar to the one seen above for $d$-regular trees which plays the role of Lemma \ref{decomp} in the last section.

\begin{thm}\label{teodescomp}
 Let $G$ be a simple finite graph, let $N,k\in \mathbb{N}$ with $N\geq 2$ and  $k\geq 3$ and let $A=A_N$ denote the adjacency matrix of $G^{*N}$.
 Then, we have de following recurrence formula
 \begin{equation}\label{decomp2}A^{[k]}A=\tilde A^{[k+1]}+(N-1)\textnormal{deg}(e)A^{[k-1]}+D^{[k-1]}_N +\Delta^{[k]}_N,\end{equation}
 where $(\tilde A^{[k+1]})_{ij}=|\{l\sim j:\partial(i,l)=k\}|$ whenever $\partial(i,j)=k+1$,\\
 $(D_N^{[k-1]})_{ij}=|\{l\sim j:\partial(i,l)=k, \textnormal{and $j$ and $l$ are in the same copy of $G$}\}|$ if $\partial(i,j)=k-1$ and
 $(\Delta_N^{[k]})_{ij}=|\{l\sim j:\partial(i,l)=k \}|$ when  $\partial(i,j)=k.$
 
\end{thm}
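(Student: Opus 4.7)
The plan is to verify the claimed recurrence entrywise. The left-hand side admits the combinatorial description
\[(A^{[k]}A)_{ij} \;=\; \sum_{l}(A^{[k]})_{il}A_{lj} \;=\; |\{l\sim j:\partial(i,l)=k\}|,\]
and the triangle inequality $|\partial(i,l)-\partial(i,j)|\leq 1$ (valid since $l\sim j$) forces this entry to vanish unless $\partial(i,j)\in\{k-1,k,k+1\}$. I would handle the three surviving regimes case by case.

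The two extreme cases are immediate from the definitions stated in the theorem: when $\partial(i,j)=k+1$, every neighbor $l$ of $j$ with $\partial(i,l)=k$ is counted exactly by $(\tilde A^{[k+1]})_{ij}$, and when $\partial(i,j)=k$ they are counted by $(\Delta_N^{[k]})_{ij}$. The substantive case is $\partial(i,j)=k-1$. Here I would partition the neighbors $l$ of $j$ into (a) those lying in the same copy of $G$ as $j$ in the sense of the definition recalled in Section 2, and (b) those lying in a different copy. Class (a) contributes exactly $(D_N^{[k-1]})_{ij}$ by definition, so only class (b) requires work.

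For class (b), I would use the free-product structure: any vertex $j$ of $G^{*N}$ sits in $N$ copies of $G$, one of which contains its leading letter as a non-root vertex, while the remaining $N-1$ copies are attached at $j$ as a new root by prepending a letter of a different index. Each of these $N-1$ prepended copies contributes exactly $\deg(e)$ neighbors of $j$, giving a total of $(N-1)\deg(e)$ different-copy neighbors. The key geometric claim is that every such neighbor $l$ satisfies $\partial(i,l)=k$: in the tree-of-copies picture of the free product, a prepended copy at $j$ forms a branch attached to the rest of the graph only through $j$, so the unique geodesic from $i$ to $l$ is forced to traverse $j$, giving $\partial(i,l)=\partial(i,j)+1=k$. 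Combining (a) and (b) yields the entry $(N-1)\deg(e)(A^{[k-1]})_{ij}+(D_N^{[k-1]})_{ij}$ of the right-hand side in the regime $\partial(i,j)=k-1$.

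The main obstacle I anticipate is making the tree-of-copies / geodesic-uniqueness step of class (b) fully rigorous. One must verify that every prepended copy attached at $j$ is genuinely "hanging off" $j$ as far as the vertex $i$ is concerned, i.e.\ that $i$ lies outside each such copy. Intuitively this is forced by the reduced-word structure of the free product, but for arbitrary $i$ it requires tracking which "entering copy" the $i$-to-$j$ geodesic uses, plausibly via an induction on the reduced-word length of $i$ relative to the branch containing $j$, or by a direct analysis of free-product distances. Once this structural lemma is in place, the constant $(N-1)\deg(e)$ is simply the product of the number of prepended copies and the degree of the root within each, and the desired identity follows.
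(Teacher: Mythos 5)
Your outline reproduces the paper's proof step for step: the same entrywise reduction to the three cases $\partial(i,j)\in\{k-1,k,k+1\}$, the observation that the cases $k$ and $k+1$ hold by the very definitions of $\Delta_N^{[k]}$ and $\tilde A^{[k+1]}$, and the same split of the case $\partial(i,j)=k-1$ into same-copy and different-copy neighbors of $j$. The obstacle you flag at the end, however, is not a formality to be "made rigorous": it is a genuine gap, and the verification you propose cannot succeed, because the geometric claim you want to prove is false. Nothing in the hypothesis $\partial(i,j)=k-1$ prevents $i$ from lying inside (or in the branch attached through) one of the $N-1$ copies prepended at $j$; when it does, the neighbors of $j$ inside that particular copy are reached from $i$ without passing through $j$, and their distance to $i$ can be $k-2$, $k-1$ or $k$. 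Concretely, take $G=P_3$ with vertices $a\sim b\sim c$ rooted at $e=a$, $N=2$ and $k=3$, and let $j=c_1$ and $i=c_2c_1$ (a vertex of the copy of $G$ prepended at $j$). Then $\partial(i,j)=2=k-1$, the two neighbors of $j$ are $b_1$ and $b_2c_1$, and only $b_1$ is at distance $3$ from $i$, so $(A^{[3]}A)_{ij}=1$; the right-hand side of \eqref{decomp2} gives $(N-1)\deg(e)+(D_N^{[2]})_{ij}=1+1=2$. So it is the identity itself, not merely your argument for it, that fails on such entries. (A second, smaller leak in your partition: when $v_1\sim e$, the root of $j$'s own copy is a neighbor of $j$ that lies in no prepended copy and is also not "in the same copy as $j$" in the sense of Definition 2.2, which excludes roots; when it happens to be at distance $k$ from $i$ it is counted on the left but by neither term on the right.)

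You should know that the paper's own proof makes exactly the same unjustified assertion --- it simply declares that the prepended copies "contribute $(N-1)\deg(e)$" --- so you have reproduced its argument faithfully, gap included; your honesty in isolating the weak step is the only difference. A correct treatment must either restrict the identity to pairs $(i,j)$ whose geodesic enters $j$ through $j$'s own copy at the right vertex, or collect the exceptional entries into an additional error matrix and show, as is done for $\Delta_N^{[k]}$ and $D_N^{[k-1]}$, that it is dominated by lower-order adjacency matrices and therefore vanishes after division by $N^{(k+1)/2}$. The asymptotic statement of Theorem \ref{T2} survives either repair, but the exact recurrence \eqref{decomp2}, as stated and as you propose to prove it, does not.
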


\begin{proof}
 It's easy to see that $(A^{[k]}A)_{ij}$ is zero if $|\partial(i,j)-k|\geq 2$. So
 $(A^{[k]}A)_{ij}>0$ implies that $\partial(i,j)=k-1,\ k$ or $k+1$.

Notice that for each neighbor $l$ of $j$ at distance $k$ from $i$, there is one edge from $i$ to $l$ in $A^{[k]}$ and one from $l$ to $j$ in $A$. Thus each of these neighbors adds $1$ to $(A^{[k]}A)_{ij}$ and there is no further contribution.

First, if $\partial(i,j)=k-1$ there are two types of neighbors $l$ at distance $k$ in $G^{*N}$. The first ones come from the $(N-1)$ copies of $G$ in $G^{*N}$ which have $j$ as a root and contribute to the matrix $A^{[k-1]}$ by $(N-1)\textnormal{deg}(e)$ and the second ones in  which $j$ is in the same copy that $l$, which contribute to $D_N^{[k-1]}$.

Secondly, if $\partial(i,j)=k$ and $(A^{[k]}A)_{ij}>0$ is the number of neighbors of $j$ which are at distance $k$ from
 $i$, then we get $\Delta_N^{[k]}$.
 
Finally, if we have $\partial(i,j)=k+1$, so there exists at least one minimal path from $i$ to $j$, which contains itself a neighbor of $j$ which is at distance $k$ from $i$, therefore
this path contributes to $\tilde A^{[k+1]}$. 

\end{proof}

\subsection*{Proof of Theorem \ref{T2}} We now proceed to prove Theorem \ref{T2} in various steps. While the steps are very similar as the one for the case $k=2$ there are some non trivial modifications to be done for the general case.

We will use induction over $k$. First, observe that for $k=2$, we obtained the conclusion in the last section. Now, suppose
that the fact holds for all $l\leq k$. In order to complete the proof we need the following lemmas and corollaries.
\begin{lem}\label{6.2}
 The mixed moments of $A^{[k]}A/N^{\frac{k+1}{2}}$ and $\Delta_N^{[k]}/N^{\frac{k+1}{2}}$ asymptotically vanish.
\end{lem}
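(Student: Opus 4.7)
The plan is to mirror the proof of Lemma \ref{vanishing moments}: first establish an entrywise domination of $\Delta_N^{[k]}$ by a constant multiple of $A^{[k]}$, and then exploit the extra factor of $N^{-1/2}$ that this substitution introduces to force the mixed moments to vanish.

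The first and main step is the entrywise inequality
\[
\Delta_N^{[k]}\;\preceq\; C\cdot A^{[k]},\qquad C:=\max_{v\in V(G)}\deg_G(v).
\]
Fix vertices $x,y\in V(G^{*N})$ with $\partial(x,y)=k$ and let $z$ be a neighbor of $y$ with $\partial(x,z)=k$. Any neighbor of $y$ in $G^{*N}$ is obtained either by changing the last letter of the word representing $y$ (an edge inside the same copy of $G$ where that letter sits) or by prepending a letter from some other copy (an edge inside a \emph{new} copy of $G$ attached to the rest of $G^{*N}$ only through $y$). In the second situation, unless $x$ itself lies in that new copy, every path from $x$ to $z$ must pass through $y$, so $\partial(x,z)=\partial(x,y)+1=k+1$, a contradiction. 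A short case analysis then shows that $z$ always lies in the single copy of $G$ containing the last edge of a geodesic from $x$ to $y$, and the number of candidate $z$ inside a single copy of $G$ is at most the $G$-degree of the corresponding vertex, hence at most $C$. This is the natural analogue, at distance $k$, of the fact used in Lemma \ref{vanishing moments} that free products do not create new triangles.

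With this inequality and the multiplicativity of $\succeq$ for non-negative matrices (Remark \ref{order2}(3)), for any $m_1,l_1,\dots,m_s,l_s\in\mathbb{N}$ with $l_1\ge 1$,
\[
\varphi_1\!\left[\prod_{r=1}^{s}\!\left(\frac{A^{[k]}A}{N^{(k+1)/2}}\right)^{\!m_r}\!\!\!\left(\frac{\Delta_N^{[k]}}{N^{(k+1)/2}}\right)^{\!l_r}\right]\le\frac{C^{\sum l_r}}{N^{(\sum l_r)/2}}\,\varphi_1\!\left[\prod_{r=1}^{s}\!\left(\frac{A^{[k]}}{N^{k/2}}\cdot\frac{A}{N^{1/2}}\right)^{\!m_r}\!\!\!\left(\frac{A^{[k]}}{N^{k/2}}\right)^{\!l_r}\right].
\]
By the induction hypothesis (Theorem \ref{T2} at distance $k$) the matrix $A^{[k]}/N^{k/2}$ converges in moments with respect to $\varphi_1$, and by Theorem \ref{fcltg} so does $A/N^{1/2}$, so the joint moment on the right stays bounded as $N\to\infty$. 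The prefactor $N^{-(\sum l_r)/2}\to 0$ then kills the whole expression.

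The only genuine obstacle is the first step. Justifying $\Delta_N^{[k]}\preceq C\cdot A^{[k]}$ rigorously uses the Bass--Serre-type fact that each copy of $G$ inside $G^{*N}$ is attached to the rest of the graph at a single vertex, so that branching off into a new copy always costs one extra unit of distance; once this geometric principle is in place, the rest of the argument is a direct repetition of the scheme used for $k=2$.
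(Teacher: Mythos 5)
Your proposal is correct and follows essentially the same route as the paper: the entrywise domination $\Delta_N^{[k]}\preceq (\text{max deg}(G))\,A^{[k]}$ (which the paper justifies only by the remark that the free product creates no new cycles, while you spell out the underlying tree-of-copies geometry), followed by the substitution via Remark \ref{order2} and the observation that each replaced factor carries an extra $N^{-1/2}$ while the remaining joint moments stay bounded by the induction hypothesis and Theorem \ref{fcltg}. Your write-up is in fact slightly more explicit than the paper's on both the geometric inequality and the source of the vanishing factor.
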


\begin{proof}
  By definition, since the  free product does not generate new cycles, 
 $$\Delta_N^{[k]}\preceq \text{max deg}(G)A^{[k]}.$$
 Hence, for $m_1,\ m_2,\ \dots,\ m_s,\ n_1,\ n_2,\ \dots,\ n_s\in \nn$  and $n_1>0$
 \[
  \begin{split}
   &\varphi_1\left(\left(\frac{A^{[k]}A}{N^{\frac{k+1}{2}}}\right)^{m_1}\left(\frac{\Delta_N^{[k]}}{N^{\frac{k+1}{2}}}\right)^{n_1}\cdots\left(\frac{A^{[k]}A}{N^{\frac{k+1}{2}}}\right)^{m_l}\left(\frac{\Delta_N^{[k]}}{N^{\frac{k+1}{2}}}\right)^{n_l}\right)\\
   &\leq (\text{max deg})^{\sum_i n_i}\varphi_1 \left(\left(\frac{A^{[k]}A}{N^{\frac{k+1}{2}}}\right)^{m_1}\left(\frac{A^{[k]}}{N^{\frac{k+1}{2}}}\right)^{n_1}\cdots\left(\frac{A^{[k]}A}{N^{\frac{k+1}{2}}}\right)^{m_l}\left(\frac{A^{[k]}}{N^{\frac{k+1}{2}}}\right)^{n_l}\right).\\
  \end{split}
 \]
By induction hypothesis $\left(\frac{A^{[k]}A}{N^{\frac{k+1}{2}}}\right)$ and $\left(\frac{A^{[k]}}{N^{\frac{k}{2}}}\right)$ converge  and, therefore,
the right hand side of the last inequality goes to zero.
\end{proof}
Since $\tilde A^{[k+1]}$ and $ D_N^{[k-1]}$ are subgraphs of $A^{[k]}A$, the following is a direct consequence of the previous lemma.
\begin{cor}\label{6.3}
 The mixed moments of $\left(\tilde A^{[k+1]}/N^{\frac{k+1}{2}},\ \Delta_N^{[k]}/N^{\frac{k+1}{2}}\right)$ and \\$\left(\Delta_N^{[k]}/N^{\frac{k+1}{2}},D_N^{[k-1]}/N^{\frac{k+1}{2}}\right)$ asymptotically
 vanish.
\end{cor}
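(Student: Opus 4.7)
The plan is to deduce this corollary from Lemma \ref{6.2} by a direct entrywise domination argument, using the order $\succeq$ from Definition \ref{order} together with Remark \ref{order2}. The starting point is the decomposition (\ref{decomp2}), namely
\begin{equation*}
A^{[k]}A \;=\; \tilde A^{[k+1]} \,+\, (N-1)\mathrm{deg}(e)\,A^{[k-1]} \,+\, D_N^{[k-1]} \,+\, \Delta_N^{[k]}.
\end{equation*}
Each matrix on the right-hand side has nonnegative entries (they count certain neighbors/paths), and the coefficient $(N-1)\mathrm{deg}(e)$ is also nonnegative. Therefore, dropping all but one summand, I get the two entrywise inequalities
\begin{equation*}
A^{[k]}A \succeq \tilde A^{[k+1]}, \qquad A^{[k]}A \succeq D_N^{[k-1]}.
\end{equation*}

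From here, the argument is mechanical. For any mixed moment of the first pair, say of the form
\begin{equation*}
\varphi_1\!\left(\left(\tfrac{\tilde A^{[k+1]}}{N^{(k+1)/2}}\right)^{m_1}\!\left(\tfrac{\Delta_N^{[k]}}{N^{(k+1)/2}}\right)^{n_1}\!\cdots\! \left(\tfrac{\tilde A^{[k+1]}}{N^{(k+1)/2}}\right)^{m_l}\!\left(\tfrac{\Delta_N^{[k]}}{N^{(k+1)/2}}\right)^{n_l}\right),
\end{equation*}
I apply Remark \ref{order2}(3) repeatedly (with $\Delta_N^{[k]} \succeq \Delta_N^{[k]}$ trivially) to replace each factor $\tilde A^{[k+1]}$ with $A^{[k]}A$, obtaining an entrywise upper bound on the product. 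Then Remark \ref{order2}(1) upgrades this to an upper bound on the value of $\varphi_1$. The resulting upper bound is precisely a mixed moment of $A^{[k]}A/N^{(k+1)/2}$ and $\Delta_N^{[k]}/N^{(k+1)/2}$, which tends to $0$ as $N\to\infty$ by Lemma \ref{6.2}. Since the original mixed moment is nonnegative (all matrices are nonnegative and $\varphi_1$ picks out a nonnegative entry), it is squeezed to $0$. The identical argument using $A^{[k]}A \succeq D_N^{[k-1]}$ handles the second pair.

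There is no real obstacle here; the only point to check carefully is that the inequalities $\succeq$ are preserved under all the operations in the mixed moment, which is exactly the content of Remark \ref{order2}(3), and that the resulting bounding moments are precisely of the form treated by Lemma \ref{6.2}. The corollary thus follows in two lines of writing, corresponding to the two pairs.
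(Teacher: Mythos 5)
Your proposal is correct and is exactly the paper's argument: the paper simply notes that $\tilde A^{[k+1]}$ and $D_N^{[k-1]}$ are dominated entrywise by $A^{[k]}A$ (being summands of the decomposition (\ref{decomp2})) and invokes Lemma \ref{6.2}. You have merely spelled out the entrywise-domination bookkeeping via Remark \ref{order2} that the paper leaves implicit.
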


\begin{cor}\label{6.4}
 The matrices $\Delta_N^{[k]}/N^{\frac{k+1}{2}}$ and $D_N^{[k-1]}/N^{\frac{k+1}{2}}$ converge to zero as $N$ tends to infinity.
\end{cor}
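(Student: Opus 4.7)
The plan is to exploit two entrywise majorizations together with the induction hypothesis of Theorem \ref{T2}. The first one, $\Delta_N^{[k]}\preceq c\,A^{[k]}$ with $c=\textnormal{max deg}(G)$, has already been used in the proof of Lemma \ref{6.2}. The second one, $D_N^{[k-1]}\preceq c\,A^{[k-1]}$, follows directly from the definition of $D_N^{[k-1]}$ given in Theorem \ref{teodescomp}: its nonzero entries count those neighbors of a vertex $j$ that lie in the same single copy of $G$ as $j$, and the number of such neighbors is bounded by the degree of $j$ inside that copy of $G$, hence by $c$.

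Given these majorizations, I would combine Remark \ref{order2}(1) and (3) to obtain, for every integer $n\ge 1$,
\[
\varphi_1\!\left(\left(\tfrac{\Delta_N^{[k]}}{N^{(k+1)/2}}\right)^{\!n}\right)\le \frac{c^n}{N^{n/2}}\,\varphi_1\!\left(\left(\tfrac{A^{[k]}}{N^{k/2}}\right)^{\!n}\right),
\]
and analogously
\[
\varphi_1\!\left(\left(\tfrac{D_N^{[k-1]}}{N^{(k+1)/2}}\right)^{\!n}\right)\le \frac{c^n}{N^{n}}\,\varphi_1\!\left(\left(\tfrac{A^{[k-1]}}{N^{(k-1)/2}}\right)^{\!n}\right).
\]
The induction hypothesis for Theorem \ref{T2} at levels $\le k$ gives convergence in moments of $A^{[k]}/N^{k/2}$ to $P_k(s)$ and of $A^{[k-1]}/N^{(k-1)/2}$ to $P_{k-1}(s)$, so the right-hand moments are bounded uniformly in $N$. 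The extra factors $N^{-n/2}$ and $N^{-n}$ then force the left-hand sides to vanish, which is precisely convergence to zero in moments with respect to the vacuum state for the two normalized matrices.

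There is no genuine obstacle here; the corollary is essentially a bookkeeping consequence of the crude entrywise bounds combined with power-counting in $N$. The only care required is that the argument sits inside an induction on $k$, so one must make sure the induction hypothesis (in particular, boundedness of all moments of $A^{[k]}/N^{k/2}$ and $A^{[k-1]}/N^{(k-1)/2}$) has been established at the appropriate earlier stage before invoking it to prove this corollary.
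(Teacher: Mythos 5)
Your proposal is correct and follows essentially the same route as the paper: the paper's proof simply takes $m_i=0$ in the inequality of Lemma \ref{6.2} (which rests on the same majorization $\Delta_N^{[k]}\preceq c\,A^{[k]}$) and notes that replacing $A^{[k]}$ by $A^{[k-1]}$ handles $D_N^{[k-1]}$, which is exactly your power-counting argument with the bound $D_N^{[k-1]}\preceq c\,A^{[k-1]}$ made explicit.
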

\begin{proof}
 In the proof of Lemma \ref{6.2},  by taking $m_i=0$ for all $0\leq i\leq s$ we obtain the conclusion for $\Delta_N^{[k]}/N^{\frac{k+1}{2}}$.
Using $A^{[k-1]}$ instead $A^{[k]}$ the same proof works for $D^{[k-1]}/N^{\frac{k+1}{2}}$.
\end{proof}
 In the proof of the next lemma, we shall use the following extension of a graph $G$. For $k\geq 2$ we define $G_{ext(k)}$ as the 
 graph which contains the graph $G$, and if $G$ has a cycle of even length smaller than $2k$, we add all the possible edges between the vertices of this 
 cycle. Note that $G_{ext(2)}=G_{ext}$. It is important to notice the fact that 
 $$(G_{ext(k)})^{*N}=(G^{*N})_{ext(k)}.$$
\begin{lem}\label{6.5}
 Let $k\geq 2$, then $\lim_{N\to\infty}\frac{\tilde{A}^{[k+1]}}{N^{\frac{k+1}{2}}}-\frac{A^{[k+1]}}{N^{\frac{k+1}{2}}}=0$.
\end{lem}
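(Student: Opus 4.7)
The plan is to mimic the argument used for the base case $k=2$ in Lemma \ref{lemacon}. First, I decompose
$$\tilde{A}^{[k+1]} = A^{[k+1]} + \square_N^{[k+1]},$$
where $(\square_N^{[k+1]})_{ij} := (\tilde{A}^{[k+1]})_{ij}-1$ for pairs $(i,j)$ at distance $k+1$ and zero otherwise. Thus $\square_N^{[k+1]}$ records, for each such pair, the number of \emph{extra} neighbors $l\sim j$ with $\partial(i,l)=k$ beyond the one that automatically lies on any shortest path. The lemma will follow if the mixed moments of $\square_N^{[k+1]}/N^{(k+1)/2}$ with $A^{[k+1]}/N^{(k+1)/2}$ (and with the other matrices appearing in (\ref{decomp2})) vanish as $N\to\infty$.

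The key combinatorial observation is that $(\square_N^{[k+1]})_{ij}>0$ forces the existence of two distinct $l_1,l_2\sim j$ at distance $k$ from $i$, and hence of a cycle of length at most $2(k+1)$ through $j$ in $G^{*N}$. Because the free product introduces no new cycles, this cycle lifts a cycle of $G$ of the same length; this is precisely the data used to build the extension $G_{\textnormal{ext}(k+1)}$ defined just before the lemma, where all edges between the vertices of each short cycle of $G$ are added. Using the identification $(G_{\textnormal{ext}(k+1)})^{*N}=(G^{*N})_{\textnormal{ext}(k+1)}$, every excess configuration in $G^{*N}$ is transformed into a triangle-like incidence detected by $\Delta^{[k]}$ of the extended graph.

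Concretely, I would establish an entrywise bound of the form
$$\square_N^{[k+1]} \preceq C\cdot \Delta^{[k]}_{(G_{\textnormal{ext}(k+1)})^{*N}}$$
for a constant $C=C(G,k)$ independent of $N$: each pair $(l_1,l_2)$ responsible for an excess becomes adjacent in $G_{\textnormal{ext}(k+1)}$, so that $(l_1,j,l_2)$ contributes to $\Delta^{[k]}_{\textnormal{ext}}$ at some pair at distance $k$ in the extension. Applying Lemma \ref{6.2} with $G_{\textnormal{ext}(k+1)}$ in place of $G$ forces the mixed moments of the right-hand side with $A^{[k]}_{\textnormal{ext}}A_{\textnormal{ext}}/N^{(k+1)/2}$ to vanish. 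Since entrywise $A^{[k+1]}_{G^{*N}}$ is dominated by its counterpart in the extension, the vanishing transfers back via Remark \ref{order2} and yields the asymptotic equality of $\tilde{A}^{[k+1]}/N^{(k+1)/2}$ and $A^{[k+1]}/N^{(k+1)/2}$ in moments.

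The main obstacle is precisely the combinatorial verification of the entrywise bound above: one must inspect each short cycle of $G$, determine which chords the extension introduces, and check that every excess configuration is witnessed by a unique triangle side at some pair at distance $k$ in $(G_{\textnormal{ext}(k+1)})^{*N}$, with multiplicities uniformly controlled by a $G$-dependent constant. Once this local, $N$-independent check is made, the global domination lifts immediately to the free powers and the rest of the argument is a direct transcription of the $k=2$ proof.
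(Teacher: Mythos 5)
Your overall strategy --- decomposing $\tilde A^{[k+1]} = A^{[k+1]} + \square_N^{[k+1]}$, tracing each excess entry to a short even cycle coming from a single copy of $G$, passing to the extension $G_{ext(k+1)}$, and finishing with an entrywise domination plus the induction hypothesis --- is the same as the paper's. The step that does not work is the proposed bound $\square_N^{[k+1]} \preceq C\,\Delta^{[k]}_{(G_{ext(k+1)})^{*N}}$. The two matrices have incompatible supports: $\square_N^{[k+1]}$ lives on pairs $(i,j)$ with $\partial_{G^{*N}}(i,j)=k+1$, whereas $\Delta^{[k]}$ of the extended graph lives on pairs at distance exactly $k$ in $(G_{ext(k+1)})^{*N}$. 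Chording the short even cycles shortens the $i$--$j$ geodesic, but by an amount you do not control: if the path meets several short cycles (or one longer even cycle that gets fully chorded), $\partial_{ext}(i,j)$ can drop to any value $\leq k$, and whenever it is strictly less than $k$ the $(i,j)$ entry of $\Delta^{[k]}_{ext}$ vanishes while the left-hand side is positive. Your own phrasing --- that $(l_1,j,l_2)$ contributes to $\Delta^{[k]}_{ext}$ ``at some pair at distance $k$'' --- concedes the point: a contribution at a \emph{different} entry is useless for the order $\preceq$, which via Remark \ref{order2} is the only mechanism here for converting domination into bounds on $\varphi_1$-moments. So the ``main obstacle'' you flag is not a verification that can be completed; the dominating matrix has to be changed.

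The paper's fix is precisely to allow for all possible shortened distances: it proves
$\tilde A^{[k+1]}-A^{[k+1]}\preceq (\text{max deg}(G))^{k+1}\bigl(A^{[k]}_{G_{ext(k+1)}}+A^{[k-1]}_{G_{ext(k+1)}}+\cdots+A_{G_{ext(k+1)}}\bigr)$, the constant bounding the number of excess configurations per entry uniformly in $i$, $j$ and $N$ (no new cycles appear under free products). After dividing by $N^{(k+1)/2}$, each summand is $A^{[j]}_{ext}/N^{j/2}$ multiplied by the vanishing factor $N^{-(k+1-j)/2}$ with $j\leq k$, and the induction hypothesis applied to $G_{ext(k+1)}$ gives convergence of $A^{[j]}_{ext}/N^{j/2}$; hence the right-hand side, and with it $\square_N^{[k+1]}/N^{(k+1)/2}$, tends to zero. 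If you replace your single-$\Delta^{[k]}$ bound by such a sum over all distances $1,\dots,k$ in the extension, the rest of your argument goes through. A secondary point: the lemma asserts that the difference itself tends to zero, so you need the pure moments $\varphi_1\bigl((\square_N^{[k+1]}/N^{(k+1)/2})^m\bigr)$ to vanish, not only mixed moments with $A^{[k+1]}$; the entrywise domination delivers both at once.
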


\begin{proof}
 Let $i,j\in \underset {s\in [N]} * V$ be such that $\left(\tilde A^{[k+1]}\right)_{ij}>0$. Let 
 $$C^{k+1}_{ij}=\{\text{cycles of even length in a path of length $k+1$ from $i$ to $j$}\},$$
 notice that 
 $$|C^{k+1}_{ij}|\leq (\text{max deg}(G))^{k+1}.$$
Here, is important to observe that the right side bound does not depend on $i$, $j$ neither $N$, because the free product of graph
does not produce new cycles. Then we can write
 \begin{equation}\label{ultimolema}\tilde A^{[k+1]}-A^{[k+1]}\preceq (\text{max deg}(G))^{k+1}\left(A^{[k]}_{G_{ext(k+1)}}+A^{[k-1]}_{G_{ext(k+1)}}+\cdots+A_{G_{ext(k+1)}}\right).\end{equation}
 Then, we obtain from (\ref{ultimolema})
 \[
 \begin{split}
  &\left(\frac{\tilde A^{[k+1]}-A^{[k+1]}}{N^{(k+1)/2}}\right)\\&\preceq (\text{max deg}(G))^{k+1}\left(\frac{A^{[k]}_{G_{ext(k+1)}}}{N^{(k+1)/2}}+\frac{A^{[k-1]}_{G_{ext(k+1)}}}{N^{(k+1)/2}}+\cdots+\frac{A_{G_{ext(k+1)}}}{N^{(k+1)/2}}\right)\\
  &=(\text{max deg}(G))^{k+1}\left(\frac{A^{[k]}_{G_{ext(k+1)}}}{N^\frac{k}{2}}\frac{1}{N^{1/2}}+\frac{A^{[k-1]}_{G_{ext(k+1)}}}{N^\frac{k-1}{2}}\frac{1}{N}+\cdots+\frac{A_{G_{ext(k+1)}}}{N^\frac{1}{2}}\frac{1}{N^\frac{k}{2}}\right).\\
 \end{split}
\]
By induction hypothesis we have that $\left(A^{[i]}_{G_{ext(k)}}/N^{i/2}\right)$ converges for all $i\leq k$. Therefore we have
$$\left(\frac{\tilde A^{[k+1]}-A^{[k+1]}}{N^{(k+1)/2}}\right)\underset {N\to\infty}\longrightarrow 0,$$
which completes the proof.
\end{proof}

Now we can finish the proof of Theorem \ref{T2}. From (\ref{decomp2}) we have that
\begin{equation}\label{s}
\frac{A_N^{[k+1]}}{(\text{deg}(e)N)^{\frac{k+1}{2}}}=\frac{A_N^{[k]}A_N}{(\text{deg}(e)N)^{\frac{k+1}{2}}}
-\frac{A_N^{[k-1]}}{(\text{deg}(e)N)^{\frac{k-1}{2}}}-C(N,k+1),
\end{equation}
where 
\begin{equation*}
 C(N,k+1)=\frac{\text{deg}(e)A_N^{[k-1]}+\Delta_N^{[k]}+D_N^{[k-1]}-\left(\tilde A^{[k+1]}-A^{[k-1]}\right)}{(\text{deg}(e)N)^{\frac{k+1}{2}}}.
\end{equation*}
Due to the induction hypothesis we have, $\text{deg}(e)A_N^{[k-1]}/(\text{deg}(e)N)^{\frac{k+1}{2}}$ converging to zero, furthermore
by Corollary \ref{6.4} and Lemma \ref{6.5} $$\frac{\Delta_N^{[k]}+D_N^{[k-1]}-\left(\tilde A^{[k+1]}-A^{[k-1]}\right)}{(\text{deg}(e)N)^{\frac{k+1}{2}}},$$
converges to zero. Hence
\begin{equation}\label{sss} C(N,k+1)\longrightarrow 0.\end{equation}
Finally, using the induction hypothesis we can see that
\begin{equation}\label{ss} \frac{A_N^{[k]}A_N}{(\text{deg}(e)N)^{\frac{k+1}{2}}}-\frac{A_N^{[k-1]}}{(\text{deg}(e)N)^{\frac{k-1}{2}}}\longrightarrow P_k(s)s-P_{k-1}(s)=P_{k+1}(s),\end{equation} where the last equality is given by (\ref{chev}).
Thus, mixing (\ref{s}) with (\ref{sss}) and (\ref{ss}) we obtain that $$\frac{A_N^{[k+1]}}{(\text{deg}(e)N)^{\frac{k+1}{2}}}\longrightarrow P_{k+1}(s).$$

\section{$d$-Regular Random Graphs}\label{random}

Apart from the Erdos-Renyi models \cite{E1,E2}, possibly, the random $d$-regular graphs are possibly the most studied and well understood random graphs. 

In the original paper by McKay \cite{Mc}, he proved that the asymptotical spectral distributions of $d$-regular random graph are exactly the ones appearing in (\ref{kestendistribution}). Heuristically, the reason is that, locally,  large random $d$-regular graphs look like the $d$-regular tree and thus asymptotically their spectrum should coincide. This turns out to remain true for the distance-$k$ graph and thus we shall expect to get a similar result.  In this section we formalize this intuition.

Let $X$ be a $d$-regular graph with vertex set $\{1,2,\dots,n(X)\}$. For each $i\geq 3$ let $c_i(X)$ be the number of cycles of length $i$. Let $A^{[k]}(X)$ be the adjacency matrix of the distance-$k$ graph of $X$. The following is a generalization of the main theorem in McKay \cite{Mc}. 

\begin{thm}\label{d-regular graphs}
 For $d\geq 2$ fixed, let $X_1,\ X_2,\ \dots$ be a sequence of $d$-regular graphs such that $n(X_i)\to\infty$ and  $c_j(X_i)/n(X_i)\to 0$ as $i\to\infty$ for each $j\geq 3$. Then the distribution with respect to the normalized trace of $A^{[k]}(X_i)$ converges in moments, as $i\to \infty$, to the  distribution of $A^{[k]}_d$  with respect to the vacuum state.
\end{thm}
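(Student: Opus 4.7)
\emph{Proof plan.} The plan is to combine the method of moments with a local approximation argument in the spirit of McKay's proof \cite{Mc}, adapted to the distance-$k$ graph. Since the row sums of $A^{[k]}(X_i)$ are bounded by $d(d-1)^{k-1}$, both the spectral distribution of $A^{[k]}(X_i)$ and the target distribution are supported in a fixed compact interval depending only on $d$ and $k$; convergence in moments therefore implies convergence in distribution. For any locally finite graph $G$ and vertex $v$, let $W_m(v,G)=(A^{[k]}(G)^m)_{vv}$, which counts closed walks of length $m$ starting at $v$ in $G^{[k]}$. The task reduces to showing that for each fixed $m\ge 1$,
\begin{equation*}
\tfrac{1}{n(X_i)}\sum_{v}W_m(v,X_i)\longrightarrow W_m(e,Y_d)\qquad\text{as } i\to\infty,
\end{equation*}
where $Y_d$ denotes the $d$-regular tree with root $e$, and the right-hand side is precisely the $m$-th moment of $A_d^{[k]}$ with respect to the vacuum state (equivalently, the $m$-th moment of $T_k(b)$ by Theorem \ref{Tuno}).

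The central observation is that $W_m(v,X_i)$ depends only on the rooted isomorphism class of the ball $B_{X_i}(v,R)$ with $R=(m+1)k$. Indeed, any closed walk of length $m$ in $X_i^{[k]}$ based at $v$ visits only vertices in $B_{X_i}(v,mk)$ (since consecutive vertices are at graph distance $k$ in $X_i$), and verifying that two such vertices $u,w$ satisfy $\partial_{X_i}(u,w)=k$ amounts to checking paths in $X_i$ of length at most $k$ emanating from $u$, all of which lie within $B_{X_i}(v,(m+1)k)$. Consequently, if $B_{X_i}(v,R)$ is isomorphic as a rooted graph to the ball of radius $R$ at $e$ in $Y_d$, then $W_m(v,X_i)=W_m(e,Y_d)$.

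Call a vertex $v$ \emph{good} if $B_{X_i}(v,R)$ is tree-like in the above sense, and \emph{bad} otherwise. A bad vertex must lie within distance $R$ of some cycle of length at most $2R$ in $X_i$, so the number of bad vertices is bounded by
\begin{equation*}
\sum_{j=3}^{2R} c_j(X_i)\cdot j\cdot d(d-1)^{R-1},
\end{equation*}
which is $o(n(X_i))$ by the hypothesis $c_j(X_i)/n(X_i)\to 0$ for each $j\ge 3$ (here $d(d-1)^{R-1}$ is an upper bound on the size of a ball of radius $R$). Since $W_m(v,X_i)\le (d(d-1)^{k-1})^m$ uniformly, the bad vertices contribute $o(1)$ to the average, while the good vertices contribute $W_m(e,Y_d)\bigl(1+o(1)\bigr)$, yielding the claimed limit.

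The main obstacle I anticipate is the careful verification of the locality step: one must ensure that not only the walk itself, but also every distance-$k$ verification made along the way, is controlled by a single ball of fixed radius $(m+1)k$ around $v$, so that a local tree isomorphism really does preserve the distance-$k$ walk count. Once this locality is pinned down, the remainder is routine McKay-type bookkeeping, together with the identification of the limit moments with those of $A_d^{[k]}$ at the vacuum provided by Theorem \ref{Tuno}.
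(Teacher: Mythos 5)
Your proposal is correct and follows essentially the same route as the paper's proof: both split the vertices into those whose ball of radius $O(mk)$ is acyclic (hence tree-like, so the local closed-walk count in the distance-$k$ graph equals that of $A_d^{[k]}$ at the root of the $d$-regular tree) and the remaining vertices, whose number is $o(n(X_i))$ by the cycle-count hypothesis and whose contribution is uniformly bounded. Your treatment of the locality radius $(m+1)k$ and the explicit bad-vertex count is slightly more careful than the paper's, but the argument is the same McKay-type bookkeeping.
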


\begin{proof}
We follow the original proof of McKay\cite{Mc}  with simple modifications. Let $n_r(X_i)$ denote the number of vertices $v$ of $X_i$ such that the subgraph of $X_i$ induced by the vertices at distance at most $r=mk$, where $m\in\nn$, from $v$ is acyclic. By hypothesis we have that $n_r(X_i)/n(X_i)\to 1$ as $i\to\infty$. 
The number of closed walks of length $m$ in the distance-$k$ graph of the $d$-regular graph starting at each such vertex is $\varphi(A_d^{[k]})$. For each of the remaining vertices the number of closed walks of length $m$ is certainly less than $d^r$.
Then, for each $m$, there are numbers $\hat\varphi_m(X_i)$ such that $0\leq\hat\varphi_m(X_i)\leq d^r,$ and
\[ 
 \begin{split}
 \varphi_{tr}\left(A^{[k]}(X_i)\right)&=\frac{\varphi(A^{[k]}_d)n_r(X_i)}{n(X_i)}+\frac{(n(X_i)-n_r(X_i))\hat\varphi_m(X_i)}{n(X_i)}\\
 &\longrightarrow \varphi(A^{[k]}_d)\ \ \ \ \ \text{as}\ i\to\infty.
 \end{split}
\]
 \end{proof}

Fix $d>0$. Let $s_1 <s_2, < . . $  be the sequence of possible cardinalities of regular graphs with degree $d$. For each $n$ define $R_n$ to be the set of all labeled regular graphs with degree n and order $s_i$.

In order to consider the $d$-regular uniform random graphs we use the following lemma of Wormald \cite{ Wo2}.
\begin{lem}\label{Worm}
For each $k>3$ define $c_{k,n}$ to be the  average number of $k$-cycles in the members of $R_n$. Then for each $k$, $c_{k,n}\to(d-1)^k/2k$ as $n\to \infty$. 
\end{lem}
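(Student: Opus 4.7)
The plan is to compute $c_{k,n}$ via the configuration (or pairing) model of Bollob\'{a}s, which provides a convenient way to sample uniform random $d$-regular graphs. Recall that in this model on $s_n$ vertices one assigns $d$ half-edges to each vertex and takes a uniformly random perfect matching of the $s_n d$ half-edges; conditioned on the resulting multigraph being simple, the induced distribution on labeled graphs is exactly uniform on $R_n$. The strategy has two parts: first compute the expected number of $k$-cycles in the configuration model directly, and then transfer the estimate to the uniform distribution on $R_n$.

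For the first part, by linearity of expectation the expected number of $k$-cycles in the configuration model equals the sum over ordered $k$-tuples $(v_1,\ldots,v_k)$ of distinct vertices, divided by $2k$ to account for cyclic rotations and reflections, of the probability that the cycle $v_1 v_2 \cdots v_k v_1$ is present in the random matching. For each $v_i$ there are $d(d-1)$ ordered choices of the two half-edges at $v_i$ that participate in the cycle; once these $2k$ half-edges are paired in the prescribed way the remaining half-edges admit $(s_n d - 2k - 1)!!$ matchings. Dividing by the total $(s_n d - 1)!!$ and using
\begin{equation*}
\frac{(s_n d - 2k - 1)!!}{(s_n d - 1)!!} = \frac{1}{(s_n d)^k}\bigl(1+O(1/s_n)\bigr),
\end{equation*}
the probability of a given prescribed cycle is $(d-1)^k / s_n^k\,(1+o(1))$. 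Multiplying by the $s_n(s_n-1)\cdots(s_n-k+1) \sim s_n^k$ ordered tuples and dividing by $2k$ yields the target limit $(d-1)^k/(2k)$, for the configuration model.

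The second part, transferring the estimate to the uniform distribution on $R_n$, is the main obstacle. The standard route is to invoke Bollob\'{a}s's classical fact that, for fixed $d$, the probability that the configuration model produces a simple graph converges to a positive constant (namely $\exp(-(d^2-1)/4)$); more is true, namely that the joint distribution of the number of loops, the number of multiple edges, and the numbers of $j$-cycles for $3 \le j \le k$ converges to independent Poissons with means $(d-1)^j/(2j)$ (for $j\ge 3$), and simplicity is equivalent to the loop and multi-edge counts being zero. Hence conditioning on simplicity does not affect the limiting distribution of the $k$-cycle count. To promote convergence in distribution to convergence of means one establishes uniform integrability of the $k$-cycle count, which can be obtained from a second-moment estimate on the number of $k$-cycles in the configuration model analogous to the first-moment computation above. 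Combining these inputs yields $c_{k,n}\to (d-1)^k/(2k)$.
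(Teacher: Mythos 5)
The paper does not actually prove this lemma: it is quoted verbatim from Wormald's 1981 paper on short cycles in random regular graphs, so there is no in-paper argument to compare yours against. Your configuration-model proof is correct and is the standard modern derivation (closer in spirit to Bollob\'as's pairing-model computation than to Wormald's original counting argument). The first-moment calculation is right:
\begin{equation*}
\frac{s_n(s_n-1)\cdots(s_n-k+1)}{2k}\,\bigl(d(d-1)\bigr)^k\,\frac{(s_n d-2k-1)!!}{(s_n d-1)!!}\longrightarrow \frac{(d-1)^k}{2k}.
\end{equation*}
The transfer to the uniform measure on $R_n$ is the only place where you lean on unproved inputs, namely the joint Poisson convergence of the loop, multi-edge and $j$-cycle counts and the resulting asymptotic independence of the $k$-cycle count from the simplicity event; these are exactly the classical facts of Bollob\'as and Wormald and it is legitimate to cite them, though note that the joint Poisson limit is strictly stronger than the first-moment statement being proved, so a fully self-contained write-up would observe that the joint factorial-moment computation underlying that Poisson limit is just the multi-cycle version of the first-moment computation you already carried out. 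Your final step --- a uniform second-moment bound on the cycle count together with $\Pb(\text{simple})$ being bounded away from zero, giving uniform integrability of the conditioned count --- correctly upgrades convergence in distribution to convergence of the mean, which is what the lemma asserts.
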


\begin{proof}[Proof of Theorem \ref{T3}]
Consider a graph $G_n$ which consist a disjoint union of the all the labelled graphs of size $s_n$. The eigenvalue distribution of $G_n$ coincides with the expected eigenvalue distribution of $R_n.$  Now by Lemma \ref{Worm}, $G$ satisfies the assumptions of Theorem \ref{d-regular graphs} and thus we arrive to the theorem.
\end{proof}


\begin{thebibliography}{100}
\bibitem{ALS} L. Accardi, R. Lenczewski, R. Salapata, Decompositions of the free product of graphs. \emph{Infin. Dimen. Anal. Quantum Probab. Relat. Top.} \textbf{10} (2007) 303--334. 
\bibitem{AG} O. Arizmendi, T. Gaxiola, Asymptotic spectral distributions of distance-$k$ graphs of star product graphs. \emph{Progress in Probability, XI Symposium of Probability and Stochastic Processes}, Springer \textbf{69} (2015).
\bibitem{Br} A. E. Brouwer, W. H. Haemers.,\emph{Spectra of graphs}. Springer (2012).
\bibitem{Ch} F. R. K. Chung, \emph{Spectral Graph Theory}. AMS (1997).
\bibitem{E1} P. Erd\"os, A. R\'enyi, On Random Graphs I, \emph{Publ. Math.  Debrecen} \textbf{6} (1959) 290--297.
\bibitem{E2} P. Erd\"os, A. R\'enyi, On the evolution of random graphs, \emph{Publ. Math. Inst. Hungar. Acad. Sci.} \textbf{5} (1960) 17-61.
\bibitem {Hi} Y. Hibino, Asymptotic spectral distributions of distance-k graphs of Hamming graphs. \emph{Commun. Stoch. Anal.} \textbf{7} (2013) 2 275--282.
\bibitem{HLO} Y. Hibino, H. H. Lee, N. Obata, Asymptotic spectral distributions of distance $k$-graphs of Cartesian product graphs. \emph{Colloq. Math}. \textbf{132} (2013) 1 35--51.
\bibitem{K} J. Kurihara, Asymptotic spectral analysis of a distance k-graph of N-fold direct product of a graph, \emph{Commun. Stoch. Anal.} \textbf{6} (2012) 213--221.
\bibitem{KH} J. Kurihara and Y. Hibino, Asymptotic spectral analysis of a generalized N-cube by a quantum probabilistic method, \emph{Inﬁn. Dimen. Anal. Quantum Probab. Relat. Top.} \textbf{14} (2011) 409--417.
\bibitem{Kes} H. Kesten, Symmetric random walks on groups, \emph{Trans. Amer. Math. Soc.} \textbf{92} (1959) 336--354. 
\bibitem{LO}H. H.  Lee and N.  Obata, Distance-k graphs of hypercube and q-Hermite polynomials. \emph{Infin. Dimens. Anal. Quantum Probab. Relat. Top.} \textbf{16} (2013) 2 1350011 12 pp. 
\bibitem{Mc} B. McKay, The expected eigenvalue distribution of a large regular graph, \emph{Lin. Alg. Appl} \textbf{40} (1981), 203–216.
\bibitem{O2} N. Obata, Asymptotic spectral distributions of distance k-graphs of large-dimensional hypercubes. \emph{Banach Center Publ.} \textbf{96} (2012) 287--292.
\bibitem{Voi} D. Voiculescu, Symmetries of some reduced free product  $C^{*}$-algebras in \emph{Operator algebras and their connections with topology and ergodic theory},  Lecture Notes in Math, \textbf{1132}, Springer, Berlin, 1985. 
\bibitem{Wo1} N. C. Wormald, Some problems in the enumeration of labelled graphs, Ph.d.Thesis, Department of Mathematics, Univ. of Newcastle, 1978. 
\bibitem{Wo2} N. C. Wormald, The Asymptotic Distribution of Short Cycles in Random Regular Graphs, \emph{J. Combin. Theory Ser. B.} \textbf{31}, (1981) 168-182.
\end{thebibliography}
\end{document}